\newtheorem{thm}{Theorem}[section]
\newtheorem{coro}[thm]{Corollary}
\newtheorem{lemma}[thm]{Lemma}
\newtheorem{prop}[thm]{Proposition}
\theoremstyle{definition}
\newtheorem{remark}[thm]{Remark}
\theoremstyle{definition}
\newtheorem{defn}[thm]{Definition}
\newtheorem{example}[thm]{Example}
\numberwithin{equation}{thm}
\newcommand{\newpara}{\noindent\refstepcounter{thm}{\bf(\thethm)\;}}  
\newcommand{\car}{\mathrm{char}}
\newcommand{\cls}{\mathrm{cls}}
\newcommand{\gen}{\mathrm{gen}}
\newcommand{\Ker}{\mathrm{Ker}}
\newcommand{\rO}{\mathrm{O}}
\newcommand{\Pic}{\mathrm{Pic}}
\newcommand{\Spm}{\mathrm{Spm}}
\newcommand{\spn}{\mathrm{spn}}
\newcommand{\C}{\mathbb{C}}
\newcommand{\N}{\mathbb{N}}
\newcommand{\R}{\mathbb{R}}
\newcommand{\Z}{\mathbb{Z}}
\newcommand{\fp}{\mathfrak{p}}
\newcommand{\bfA}{\mathbf{A}}
\newcommand{\bfI}{\mathbf{I}}
\newcommand{\bfO}{\mathbf{O}}
\newcommand{\bfSpin}{\mathbf{Spin}}
\definecolor{tiangreen}{rgb}{0.24, 0.82, 0.44}                                 
\begin{document}

\title{\textbf{Strong Approximation and Hasse Principle for Integral Quadratic Forms over Affine Curves}}
\author{Yong HU, Jing LIU and Yisheng TIAN}
\date{}

\maketitle 

\begin{flushright}

\emph{In memory of Professor Linsheng YIN}

\end{flushright}

\begin{abstract}
We extend some parts of the representation theory for integral quadratic forms over the ring of integers of a number field to the case  over the coordinate ring $k[C]$ of an affine curve $C$ over a general base field $k$.
By using the genus theory, we link the strong approximation property of certain spin groups to the Hasse principle for representations of integral quadratic forms over $k[C]$ and derive several applications.
In particular, we give an example where a spin group does not satisfy strong approximation.
\end{abstract}

MSC 2020: 11E04 11E25 11E57 20G35

Key words: integral quadratic forms, strong approximation, Hasse principle, spinor genus, spin group

\section{Introduction}

As is well known, the foundation of an arithmetic theory of quadratic forms over the rational numbers started with Minkowski's work in the 1880s, and was  generalized to general number fields by Hasse in the 1920s. Witt's work \cite{Witt37} marks the beginning of the modern algebraic theory of quadratic forms over arbitrary fields. On the other hand, over rings of algebraic integers in number fields, there has also been a very well developed theory of integral quadratic forms for a long time, dating back at least to the work of Eichler and Kneser (e.g. \cite{Eichler52book}, \cite{Kneser56ArchMathVol7}). Among others, they introduced the notions of genera and spinor genera of quadratic forms in any number of variables, which play an essential role in the investigation of the Hasse principle for integral quadratic forms. In Kneser's work, strong approximation for spin groups has been used to study class numbers of integral forms. As tremendous subsequent research achievements (\cite{HsiaKitaokaKneser78crelle}, \cite{HsiaShaoXu98crelle}, \cite{ChanXu04compositio}, \cite{XuFei05MA}, \cite{XuZhang22TAMS}, etc.) demonstrate, results and ideas from Eichler and Kneser, in particular those about the genus theory,  have great influence on the subject.

Let $C$ be an irreducible normal affine curve over a field $k$ of characteristic different from 2. In various situations where $k$ is a field of arithmetic interest, a considerably large amount of research has been done on the arithmetic of quadratic forms over the function field $k(C)$ (rather than the coordinate ring $k[C]$). We mention in particular the papers \cite{HHK}, \cite{PS10}, \cite{CTPS12CMH} (where $k$ is a complete discrete valuation field) and \cite{LPS12} (where $k$ is a number field). As for integral quadratic forms, some techniques and results in the case where $k[C]$ is a one-variable polynomial ring (and $C$ is the affine line) are discussed in Gerstein's book \cite{Gerstein08book}. In fact, basic definitions such as genera and spinor genera can be easily generalized to the general case over arbitrary Dedekind domains, and hence apply to the affine coordinate ring $k[C]$ of a general curve. However, except when $k$ is finite (in which case $k(C)$ is a global field), we feel  that in the general setting, the integral theory over $k[C]$ has not been sufficiently developed, and that the strength of the genus theory  does not seem to have been taken full advantage of.

\medskip

In this paper, we first clarify several situations where the genus theory of integral quadratic forms (or quadratic lattices in geometric terms) over a general Dedekind domain $R$ can be useful. In \S\ref{sec2} we show that the proper spinor class number $g^+(L)$ of a lattice $L$ can be bounded by an idelic index associated with $L$, and that under some assumptions on local spinor norms, $g^+(L)$ can also be bounded by the size of a 2-torsion quotient of the Picard group of $R$. Then we prove that analogous to the global field case, proper spinor classes of $L$ coincide with its proper classes provided that the spin group of the quadratic space $V$ spanned by $L$ satisfies strong approximation over $R$ (Theorem\;\ref{3.2}). This result is used to deduce two sufficient conditions for the (integral)  Hasse principle for representations of quadratic lattices (Theorem\;\ref{3.5}). It is known that the spin group $\mathbf{Spin}(V)$ satisfies strong approximation when $V$ is isotropic. So, in that case we obtain several applications of the integral Hasse principle when $R=k[C]$ for a curve defined over a field $k$ with some special properties.

Finally, taking $R$ to be a polynomial ring over a $C_1$-field $k$ (in the sense of \cite{Lang52}), we give an example to show that when $V$ is anisotropic, the integral Hasse principle for the representability by $L$ and strong approximation of $\mathbf{Spin}(V)$ can fail (Theorem\;\ref{4.2}). In our example the group $\mathbf{Spin}(V)$ can also be viewed as a simply connected group of inner type A. So this shows that the isotropy assumption cannot be dropped in \cite[Theorem\;4.3]{HuTian23Whitehead}, where strong approximation is proved for isotropic groups of the same type (if $k$ is a $C_1$-field).

\section{Genus theory for quadratic lattices}\label{sec2}

As is customary, we shall use the language of lattices in the study of integral quadratic forms. A standard reference is O'Meara's book \cite{OMeara00}. In this section we recall some key definitions and facts, and we show how to extend the genus theory from the classical global field case to a more general setting.

\medskip

We fix a Dedekind domain $R$ with fraction field $F$.
Assume that the characteristic $\car(F)$ is not 2.
Let $\Spm(R)$ denote the set of maximal ideals of $R$.
Each $\fp\in\Spm(R)$ is also called a \emph{place} of $F$ over $R$.
For each place $\fp$, let $\widehat{R}_{\fp}$ be the completion of $R$ at $\fp$ and let $F_{\fp}$ be the fraction field of $\widehat{R}_{\fp}$.
If the residue field $\kappa(\fp)\colonequals R/\fp$ has characteristic different from 2, we say that $\fp$ is a \emph{non-dyadic place}.

\

\newpara\label{2.1}  By a (quadratic) \emph{lattice} over $R$ we mean a finitely generated torsion-free $R$-module $L$ together with a nondegenerate quadratic form $f$ defined on the vector space $FL=F\otimes_RL$. Put $V=FL$. We will also say that $L$ is an $R$-lattice on the quadratic space $(V,\,f)$. The \emph{dimension} $\dim L$ of $L$ is defined as the dimension of the space $V=FL$.

If $f(L)\subseteq R$, we say that the $R$-lattice $L$ is \emph{integral}. If moreover $L$ is free as an $R$-module, then by choosing a basis of $L$ over $R$ we may identify the pair $(L,\,f)$ with a homogeneous degree two polynomial with coefficients in $R$. In this case the lattice $L$ will also be called an \emph{integral quadratic form} over $R$.

Let $\bfO^+(V)$ denote the special orthogonal group of $(V,\,f)$ as an algebraic group (\cite[\S23, p.~348]{KMRT98}).
Let $\rO^+(V)$ be the group of $F$-points of $\bfO^+(V)$.
This group acts naturally on the set of lattices on $V$.
The \emph{proper class} $\cls^+(L)$ of $L$ is defined as the $\rO^+(V)$-orbit of $L$ under this action, namely,
\[
\cls^+(L)\colonequals \{\sigma(L)\,|\,\sigma\in\rO^+(V)\}\,.
\]

For each $\fp\in\Spm(R)$, put $V_{\fp}=F_{\fp}V$ and $L_{\fp}=\widehat{R}_{\fp}L$. Let $\rO^+_{\bfA}(V)$ denote the group of adelic points of $\bfO^+(V)$ over $R$, i.e.,
\begin{align*}
\rO^+_{\bfA}(V)
\colonequals\, & \bigg\{(\sigma_{\fp})\in\prod_{\fp\in\Spm(R)}\rO^+(V_{\fp})\,\Big|\, \sigma_{\fp}(L_{\fp})=L_{\fp} \text{ for almost all } \fp\bigg\}\,
\\
=\, & \bigg\{(\sigma_{\fp})\in\prod_{\fp\in\Spm(R)}\rO^+(V_{\fp})\,\Big|\, ||\sigma_{\fp}||_{\fp}=1 \text{ for almost all } \fp\bigg\}\,,
\end{align*}
%
where $||\cdot||_{\fp}$ is the norm on $\rO^+(V_{\fp})$ induced by the $\fp$-adic norm on $F_{\fp}$.
The second equality follows from \cite[Example 101:1]{OMeara00}.
In particular, $\rO^+_{\bfA}(V)$ is independent of the choice of the $R$-lattice $L$ on $V$.
Moreover, via the diagonal embedding we may view $\rO^+(V)$ as a subgroup of $\rO^+_{\bfA}(V)$ (by \cite[Example 101:4]{OMeara00}).

For every $\underline{\sigma}=(\sigma_{\fp})\in\rO^+_{\bfA}(V)$,
the same argument as in \cite[\S\,101.D, p.~297]{OMeara00} shows that there is a unique lattice $K$ on $V$ such that $K_{\fp}=\sigma_{\fp}L_{\fp}$ for all $\fp\in\Spm(R)$. We denote this lattice $K$ by $\underline{\sigma}L$. The \emph{genus} $\gen(L)$ of $L$ is defined as the orbit of $L$ under this action of $\rO^+_{\bfA}(V)$, i.e.,
\[
\gen(L)\colonequals \{\underline{\sigma}L\,|\,\underline{\sigma}\in\rO^+_{\bfA}(V)\}\,.
\]One can decompose $\gen(L)$ into a disjoint union of proper classes $\cls^+(M)$ for a certain family of lattices $M$ on $V$. The number of proper classes in such a decomposition, denoted by $h^+(L)$ and considered as a positive integer or $+\infty$, is called the \emph{proper class number} of $L$.

\medskip

For each $\fp\in\Spm(R)$, let $\theta_{\fp}: \rO^+(V_{\fp})\to F^\times_{\fp}/F_{\fp}^{\times 2}$ be the spinor norm map for the quadratic space $V_{\fp}$ (\cite[\S55]{OMeara00}). (For a commutative ring $A$, we denote by $A^{\times}$ the group of units in $A$.)
For any subset $X\subseteq \rO^+(V_{\fp})$, we shall often consider the image $\theta_{\fp}(X)$ as a subset of $F_{\fp}^\times$, which is a union of cosets of $F_{\fp}^{\times 2}$. If $\fp$ is a non-dyadic place and the local lattice $L_{\fp}$ is modular (in the sense of \cite[\S\,82.G]{OMeara00}), then as in \cite[(92:5)]{OMeara00} one can show that
$\theta_{\fp}(\rO^+(L_{\fp}))\subseteq \widehat{R}_{\fp}^\times F_{\fp}^{\times 2}$,
where
\[
\rO^+(L_{\fp})\colonequals \{\sigma \in\rO^+(V_{\fp})\,|\,\sigma (L_{\fp})=L_{\fp}\}\,.
\]
Since $\car(F)\neq 2$, all but finitely many places in $\Spm(R)$ are non-dyadic. Thus, letting $\bfI_F$ be the \emph{id\`ele group} of $F$ with respect to $\Spm(R)$, i.e., the restricted product of $F_{\fp}^\times$ relative to $\widehat{R}_{\fp}^\times$ for $\fp\in\Spm(R)$, the local spinor norm maps induce an adelic spinor norm map $\theta_{\bfA}\,:\;\rO^+_{\bfA}(V)\to \bfI_F/\bfI_F^2$. We define
\[
\rO'_{\bfA}(V)\colonequals \Ker\big(\theta_{\bfA}\,:\;\rO^+_{\bfA}(V)\longrightarrow \bfI_F/\bfI_F^2\big)\,.
\]
The \emph{proper spinor genus} $\spn^+(L)$ of $L$ is defined to be the orbit of $L$ under the subgroup $\rO^+(V)\rO'_{\bfA}(V)$ of $\rO^+_{\bfA}(V)$.
By construction, we have
$\cls^+(L)\subseteq \spn^+(L)\subseteq\gen(L)$.
The \emph{proper spinor class number} $g^+(L)\in\N\cup\{+\infty\}$ is defined as the number of disjoint proper spinor genera contained in $\gen(L)$.

\begin{prop}\label{2.2}
  Let $L$ be an $R$-lattice and let $V=FL$.
  Define
  \[
  \bfI_F^L\colonequals \{(a_{\fp})\in\bfI_F\,|\, a_{\fp}\in \theta_{\fp}\big(\rO^+(L_{\fp})\big) \text{ for all } \fp\in\Spm(R)\}\,.
  \]Then $g^+(L)$ divides $\big[\bfI_F:\theta_{\bfA}(\rO^+(V))\bfI_F^L\big]$ if the latter is finite.
\end{prop}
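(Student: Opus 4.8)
The plan is to set up a map from the double coset space describing proper spinor genera in $\gen(L)$ to the idelic quotient $\bfI_F/\theta_{\bfA}(\rO^+(V))\bfI_F^L$, and to show this map is a bijection (or at least an injection onto a set whose size divides the index). First I would recall that $\gen(L)$ is in bijection with the double coset space $\rO^+(V)\backslash \rO^+_{\bfA}(V)/\rO^+_{\bfA}(L)$, where $\rO^+_{\bfA}(L)\colonequals\{(\sigma_{\fp})\in\rO^+_{\bfA}(V)\mid \sigma_{\fp}(L_{\fp})=L_{\fp}\text{ for all }\fp\}$ is the stabilizer of $L$; similarly, the proper spinor genera inside $\gen(L)$ are parametrized by the double cosets $\rO^+(V)\rO'_{\bfA}(V)\backslash\rO^+_{\bfA}(V)/\rO^+_{\bfA}(L)$. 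Thus $g^+(L)$ is the cardinality of this last double coset space.

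Next I would push everything through the adelic spinor norm map $\theta_{\bfA}\colon \rO^+_{\bfA}(V)\to\bfI_F/\bfI_F^2$. By definition $\rO'_{\bfA}(V)=\Ker(\theta_{\bfA})$, so $\theta_{\bfA}$ factors through $\rO^+_{\bfA}(V)/\rO'_{\bfA}(V)$ and induces a well-defined surjection
\[
\rO^+(V)\rO'_{\bfA}(V)\backslash\rO^+_{\bfA}(V)/\rO^+_{\bfA}(L)\;\longrightarrow\; \theta_{\bfA}(\rO^+(V))\backslash\theta_{\bfA}\big(\rO^+_{\bfA}(V)\big)/\theta_{\bfA}\big(\rO^+_{\bfA}(L)\big),
\]
where all three groups on the right are subgroups of the abelian group $\bfI_F/\bfI_F^2$. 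The key point is that this induced map is a \emph{bijection}: surjectivity is clear, and injectivity follows because the source double coset is exactly the preimage of the target double coset under $\theta_{\bfA}$ — if $\theta_{\bfA}(\underline\sigma)$ and $\theta_{\bfA}(\underline\tau)$ lie in the same coset of $\theta_{\bfA}(\rO^+(V))\theta_{\bfA}(\rO^+_{\bfA}(L))$, then after multiplying $\underline\sigma$ by suitable elements of $\rO^+(V)$ and $\rO^+_{\bfA}(L)$ we may assume $\theta_{\bfA}(\underline\sigma)=\theta_{\bfA}(\underline\tau)$, whence $\underline\sigma\,\underline\tau^{-1}\in\Ker\theta_{\bfA}=\rO'_{\bfA}(V)\subseteq\rO^+(V)\rO'_{\bfA}(V)$, so $\underline\sigma$ and $\underline\tau$ represent the same double coset on the left. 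Hence $g^+(L)$ equals the order of the double coset space on the right.

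Now I would identify the right-hand side with a quotient of $\bfI_F/\bfI_F^2$. Since the ambient group is abelian, the double coset space $\theta_{\bfA}(\rO^+(V))\backslash\theta_{\bfA}(\rO^+_{\bfA}(V))/\theta_{\bfA}(\rO^+_{\bfA}(L))$ is just the quotient group $\theta_{\bfA}(\rO^+_{\bfA}(V))\big/\big(\theta_{\bfA}(\rO^+(V))\cdot\theta_{\bfA}(\rO^+_{\bfA}(L))\big)$. I claim $\theta_{\bfA}(\rO^+_{\bfA}(L))$ maps onto the image of $\bfI_F^L$ in $\bfI_F/\bfI_F^2$: indeed, componentwise $\theta_{\fp}(\rO^+(L_{\fp}))$ is by definition the local constraint cutting out $\bfI_F^L$, and an element $(a_{\fp})\in\bfI_F^L$ can be realized as $\theta_{\bfA}$ of an adelic isometry stabilizing $L$ because each local factor $\rO^+(L_{\fp})$ surjects onto $\theta_{\fp}(\rO^+(L_{\fp}))$, and for almost all $\fp$ one has $a_{\fp}\in\widehat R_{\fp}^\times F_{\fp}^{\times2}$ so the chosen local isometries lie in $\rO^+(L_{\fp})$ — ensuring the adelic integrality condition holds. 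Therefore the target group is a quotient of $\bfI_F\big/\theta_{\bfA}(\rO^+(V))\,\bfI_F^L$ (after passing modulo $\bfI_F^2$, noting $\bfI_F^2\subseteq\bfI_F^L$ since each $\theta_{\fp}(\rO^+(L_{\fp}))$ contains $F_{\fp}^{\times2}$), and consequently $g^+(L)$ divides $[\bfI_F:\theta_{\bfA}(\rO^+(V))\bfI_F^L]$ whenever the latter is finite.

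The main obstacle I anticipate is the surjectivity of $\theta_{\bfA}(\rO^+_{\bfA}(L))$ onto the image of $\bfI_F^L$ together with the bookkeeping of the "almost all $\fp$" integrality condition in the restricted product: one must check that a prescribed idele in $\bfI_F^L$, whose components are integral units squared for all but finitely many $\fp$, can be lifted to a genuine element of $\rO^+_{\bfA}(V)$ that moreover stabilizes $L_{\fp}$ for almost all $\fp$ (so that it lies in $\rO^+_{\bfA}(L)$, not merely in $\rO^+_{\bfA}(V)$). This uses the earlier observation that $\theta_{\fp}(\rO^+(L_{\fp}))\subseteq\widehat R_{\fp}^\times F_{\fp}^{\times2}$ for non-dyadic $\fp$ with $L_{\fp}$ modular, which holds for almost all $\fp$; the finitely many remaining places are handled individually and contribute no constraint on membership in the restricted product. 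Everything else is a formal manipulation of double cosets in an abelian group.
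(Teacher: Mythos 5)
Your proof is correct and is essentially an expanded version of the paper's own argument: the paper simply notes (citing O'Meara (102:7a)) that the composite $\phi$ of $\theta_{\bfA}$ with the quotient map $\bfI_F\to\bfI_F/\big(\theta_{\bfA}(\rO^+(V))\bfI_F^L\big)$ satisfies $g^+(L)=[\rO^+_{\bfA}(V):\Ker(\phi)]$, and your double-coset bookkeeping (including the surjectivity of $\theta_{\bfA}$ from the stabilizer of $L$ onto the image of $\bfI_F^L$) is exactly what underlies that one line. One small slip: $\theta_{\bfA}\big(\rO^+_{\bfA}(V)\big)\big/\big(\theta_{\bfA}(\rO^+(V))\,\theta_{\bfA}(\rO^+_{\bfA}(L))\big)$ is a \emph{subgroup} of $\bfI_F/\big(\theta_{\bfA}(\rO^+(V))\bfI_F^L\big)$, not a quotient of it, but Lagrange's theorem yields the same divisibility conclusion.
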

\begin{proof}
The proof of \cite[(102:7a)]{OMeara00} is still valid in our situation.
Indeed, the adelic spinor norm induces a group homomorphism
$\phi:\rO^+_{\bfA}(V)\to \bfI_F/\big(\theta_{\bfA}(\rO^+(V))\bfI_F^L\big)$ with $g^+(L)=[\rO^+_{\bfA}(V):\Ker(\phi)]$.
\end{proof}

Let $\Pic(R)$ denote the Picard group of $R$ and put $\Pic(R)/2\colonequals \Pic(R)\otimes_{\Z}(\Z/2\Z)$.
\begin{prop}\label{2.3}
  Let $L$ be an  $R$-lattice and $V=FL$. Suppose that the following conditions hold:

\begin{enumerate}[label={{\upshape(\roman*)}}]
\item\label{lattice dividing property 1}
The spinor norm map $\theta: \rO^+(V)\to F^\times/F^{\times 2}$ is surjective.

\item\label{lattice dividing property 2}
For all $\fp\in\Spm(R)$, $\theta_{\fp}(\rO^+(L_{\fp}))\supseteq \widehat{R}_{\fp}^\times$.
\end{enumerate}

  Then $g^+(L)$ divides $\#\big(\Pic(R)/2\big)$ if the latter is finite. In particular, we have $g^+(L)=1$ if $\Pic(R)/2=0$ (e.g. if $R$ is a principal ideal domain).
\end{prop}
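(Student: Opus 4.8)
The plan is to apply Proposition~\ref{2.2} and show that under hypotheses \ref{lattice dividing property 1} and \ref{lattice dividing property 2} the index $[\bfI_F:\theta_{\bfA}(\rO^+(V))\bfI_F^L]$ divides $\#(\Pic(R)/2)$. So the first step is to understand the three groups $\bfI_F$, $\theta_{\bfA}(\rO^+(V))$ and $\bfI_F^L$ sitting inside $\bfI_F/\bfI_F^2$, and to identify the quotient with a quotient of $\Pic(R)/2$.

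First I would record that $\bfI_F^L$ contains $\prod_\fp\widehat R_\fp^\times$ (the ``unit id\`eles''), because hypothesis \ref{lattice dividing property 2} says $\theta_\fp(\rO^+(L_\fp))\supseteq\widehat R_\fp^\times$ at every place, so every id\`ele all of whose components are local units lies in $\bfI_F^L$ by definition. Next, hypothesis \ref{lattice dividing property 1} gives that $\theta(\rO^+(V))=F^\times/F^{\times 2}$, so the diagonal image of $F^\times$ in $\bfI_F/\bfI_F^2$ is contained in $\theta_{\bfA}(\rO^+(V))\bfI_F^2$, hence in $\theta_{\bfA}(\rO^+(V))\bfI_F^L$. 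Therefore the product group $\theta_{\bfA}(\rO^+(V))\bfI_F^L$ contains the subgroup of $\bfI_F$ generated by $F^\times$ (diagonally) and $\prod_\fp\widehat R_\fp^\times$. The quotient of $\bfI_F$ by that subgroup is, by the standard id\`ele-class description of the Picard (divisor class) group of the Dedekind domain $R$, isomorphic to $\Pic(R)$; passing to the square classes, the quotient $\bfI_F\big/\big(F^\times\cdot\prod_\fp\widehat R_\fp^\times\cdot\bfI_F^2\big)$ is a quotient of $\Pic(R)/2$. Since $\theta_{\bfA}(\rO^+(V))\bfI_F^L$ contains this subgroup, the index $[\bfI_F:\theta_{\bfA}(\rO^+(V))\bfI_F^L]$ divides $\#(\Pic(R)/2)$ whenever the latter is finite. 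Combining with Proposition~\ref{2.2}, $g^+(L)$ divides this index, hence divides $\#(\Pic(R)/2)$.

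For the last sentence: if $\Pic(R)/2=0$ the above gives $g^+(L)\mid 1$, so $g^+(L)=1$; and a principal ideal domain has $\Pic(R)=0$, so $\Pic(R)/2=0$.

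The main technical point — really the only nonroutine step — is the identification of $\bfI_F\big/\big(F^\times\cdot\prod_\fp\widehat R_\fp^\times\big)$ with $\Pic(R)$ in this relative (affine, possibly non-global) setting, i.e. making precise the map ``id\`ele $\mapsto$ associated fractional ideal'' and checking its kernel is exactly $F^\times\cdot\prod_\fp\widehat R_\fp^\times$. This is the analogue of O'Meara's passage from id\`eles to the ideal class group; here one must be a little careful because $R$ need not be a Dedekind domain arising from a global field, but since $R$ is Dedekind the correspondence between fractional ideals and locally principal data goes through verbatim, and $\Pic(R)$ is by definition the group of fractional ideals modulo principal ones. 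I would also double-check that $\bfI_F^2$ maps into $2\cdot(\text{divisor group})$, so that after reducing mod squares one really lands in $\Pic(R)/2$ rather than something larger; this is immediate since the id\`ele-to-divisor map is a homomorphism. Once these bookkeeping facts are in place, the proof is a one-line application of Proposition~\ref{2.2}.
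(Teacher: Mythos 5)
Your proposal is correct and follows essentially the same route as the paper: both arguments combine Proposition~\ref{2.2} with the observations that condition (ii) forces $\prod_{\fp}\widehat{R}_{\fp}^\times\cdot\bfI_F^2\subseteq\bfI_F^L$, that condition (i) puts the diagonal image of $F^\times$ inside $\theta_{\bfA}(\rO^+(V))\bfI_F^2$, and that $\bfI_F/(F^\times\prod_{\fp}\widehat{R}_{\fp}^\times)\cong\Pic(R)$. The paper states this identification without further comment, exactly as you do after your bookkeeping remarks.
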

\begin{proof}
Putting $\mathbf{U}_F\colonequals \prod_{\fp\in\Spm(R)}\widehat{R}_{\fp}^\times$, we have an isomorphism  $\bfI_F/(F^\times\mathbf{U}_F)\cong \Pic(R)$, whence
$\bfI_F\big/\big(F^\times\mathbf{U}_F\bfI_F^2\big)\cong \Pic(R)/2$.  Condition (ii) implies $\mathbf{U}_F\bfI_F^2\subseteq\bfI_F^L$. Thus, condition (i) combined with Proposition\;\ref{2.2} shows that $g^+(L)\,|\,\#\big(\Pic(R)/2\big)$.
\end{proof}

\begin{example}\label{2.4}
Let $k$ be a field of characteristic $\neq 2$.
Let $R=k[C]$ be the coordinate ring of an irreducible normal affine curve $C$ defined over $k$.
Without loss of generality, we may assume that $k$ is algebraically closed in $R$.
Let $\widetilde{C}$ be an irreducible normal projective curve containing $C$ as a dense open subset, and let $S=\widetilde{C}\setminus C$.

  Let us give some examples where $\Pic(R)/2$ is finite or even trivial.

\begin{enumerate}[label={{\upshape(\arabic*)}}]
  \item Suppose that $\widetilde{C}$  is a smooth plane conic. Then $\Pic(\widetilde{C})/2\cong \Z/2\Z$ and hence $\#(\Pic(R)/2)\le 2$.

  If  either $S$   contains a $k$-rational point or $S$ contains a closed point of degree 2 and $\widetilde{C}$ has no $k$-rational points, then $\Pic(R)=0$ (\cite[Thm.\;5.1]{Samuel61Illinois}).
  \item Suppose that $\widetilde{C}$ is smooth and geometrically connected, and that  $k$ is finitely generated (over its prime field). Then $\Pic(\widetilde{C})$ is finitely generated and hence $\Pic(R)/2$ is finite (\cite[Thm.\;1.9]{CT93LNM1553}).

       If $S$ contains a system of generators of $\Pic(\widetilde{C})$, then $\Pic(R)=0$.
  \item Suppose that $k$ has \emph{finite cohomology} at the prime $2$, i.e., for every finite 2-primary torsion Galois module $\mu$ over $k$, the Galois cohomology groups $H^r(k,\,\mu),\,r\in\N$ are all finite. For example, $k$ can be an iterated Laurent series field of the form $k_0(\!(t_1)\!)\cdots(\!(t_n)\!)$, where $k_0$ is $\R,\,\C$, a $p$-adic field or a finite field.

      In this case $\Pic(\widetilde{C})/2$ and $\Pic(R)/2$ are finite. Indeed, if $\bar k$ denotes a separable closure of $k$, it is a standard fact (see e.g. \cite[Chap.\;I, Prop.\;5.3]{FK}) that the \'etale cohomology groups       $H^r(\widetilde{C}\times_k\bar k\,,\,\Z/2),\,r\in\N$ are all finite. Using the Hochschild--Serre spectral sequence we can deduce the finiteness of $H^2(\widetilde{C},\,\Z/2)$. Then the injection $\Pic(\widetilde{C})/2 \hookrightarrow H^2(\widetilde{C},\,\Z/2)$ from the Kummer theory gives the desired result.
\end{enumerate}
\end{example}

\section{Strong approximation and integral Hasse principle}
Throughout this section, $R$ is a Dedekind domain with fraction field $F$. Assume $\car(F)\linebreak\neq 2$.

Our goal in this section is to relate the strong approximation property of certain homogeneous varieties to a Hasse principle for representations of quadratic lattices (cf. Theorem\;\ref{3.5}). We also discuss some applications of the Hasse principle to integral quadratic forms.

\medskip

\newpara\label{3.1}
Let $X$ be a variety over $F$. For every field extension $E/F$, let $X(E)$ denote the set of $E$-points of $X$.
For each $\fp\in\Spm(R)$, the $\fp$-adic topology of $F_{\fp}$ induces a topology on $X(F_{\fp})$.
We define the \emph{adelic space} $X(\bfA_F)$ (over $R$) as
the restricted topological product space of the $\fp$-adic topological spaces $X(F_{\fp}),\,\fp\in\Spm(R)$
with respect to the open subspaces $X(\widehat{R}_{\fp})$ defined for all but finitely many $\fp\in\Spm(R)$.
We call the resulting topology on $X(\bfA_F)$ the adelic topology.

We say that $X$ satisfies \emph{strong approximation} (over $R$) if the diagonal image of $X(F)$ is dense in $X(\bfA_F)$ with respect to the adelic topology.
Explicitly, this means that for every (large enough) finite subset $\Sigma\subseteq\Spm(R)$, when an arbitrary nonempty $\fp$-adic open subset $U_{\fp}$ of $X(F_{\fp})$ is given for each $\fp\in \Sigma$,
 the diagonal image of $X(F)$ contains a point in $\prod_{\fp\in\Sigma}U_{\fp}\times\prod_{\fp\notin\Sigma}X(\widehat{R}_{\fp})$.

Assume further that $X$ admits an integral model over $R$,
i.e., a flat separated integral $R$-scheme $\mathcal{X}$ of finite type such that $\mathcal{X}\times_RF\cong X$.
Then, by considering the intersection of $X(F)$ with the product space $\prod_{\fp\in\Spm(R)}X(\widehat{R}_{\fp})$,
we see that $\mathcal{X}$ has an $R$-point if $X$ satisfies strong approximation and $X(\widehat{R}_{\fp})\neq\emptyset$ for all $\fp\in\Spm(R)$.

\

In the remainder of this section,
we fix  a nondegenerate quadratic space  $(V,\,f)$  of dimension $\ge 3$ over $F$.
Let $G\colonequals \bfSpin(V)$ be its spin group as an algebraic group (\cite[\S23, pp.~349-352]{KMRT98}).
Let $L$ be an integral $R$-lattice on $V$.

\medskip

The following theorem is an analogue of a well known result over global fields (cf. \cite[(104:5)]{OMeara00}).
We provide some details in the proof for the reader's convenience.

\begin{thm}\label{3.2}
Suppose that $G\colonequals \bfSpin(V)$ satisfies strong approximation over $R$. Then $\cls^+(L)=\spn^+(L)$ and hence $h^+(L)=g^+(L)$.
\end{thm}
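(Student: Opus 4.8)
The plan is to prove the only nonobvious inclusion, $\spn^+(L)\subseteq\cls^+(L)$; together with the inclusions $\cls^+(L)\subseteq\spn^+(L)\subseteq\gen(L)$ recalled above this gives $\cls^+(L)=\spn^+(L)$, and since the hypothesis applies verbatim to every lattice in $\gen(L)$ (all of which lie on $V$), decomposing $\gen(L)$ into proper spinor genera — each now a single proper class — yields $h^+(L)=g^+(L)$. To prove the inclusion, let $M\in\spn^+(L)$ and write $M=\tau\,\underline{\sigma}L$ with $\tau\in\rO^+(V)$ and $\underline{\sigma}=(\sigma_{\fp})\in\rO'_{\bfA}(V)$; replacing $M$ by $\tau^{-1}M$ we may assume $M=\underline{\sigma}L$, where $\theta_{\fp}(\sigma_{\fp})\in F_{\fp}^{\times 2}$ for all $\fp$. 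It suffices to produce $\rho\in\rO^+(V)$ with $\rho L_{\fp}=\sigma_{\fp}L_{\fp}$ for every $\fp\in\Spm(R)$, i.e. $\sigma_{\fp}^{-1}\rho\in\rO^+(L_{\fp})$ for every $\fp$ (identifying $\rho$ with its image in $\rO^+(V_{\fp})$); then the uniqueness of a lattice with prescribed completions forces $\rho L=M$, whence $M\in\cls^+(L)$.

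The mechanism is to lift everything through the central isogeny $\pi\colon G\to\bfO^+(V)$ with kernel $\mu_2$. Over any field $E$ with $\car(E)\neq 2$ the connecting map of $1\to\mu_2\to G\to\bfO^+(V)\to 1$ is the spinor norm, so $\pi$ carries $G(E)$ onto $\Ker(\theta_E)$; since each $\theta_{\fp}(\sigma_{\fp})$ is a square, every $\sigma_{\fp}$ lifts to some $\tilde\sigma_{\fp}\in G(F_{\fp})$. The crucial point is that these lifts can be chosen so that $\underline{\tilde\sigma}=(\tilde\sigma_{\fp})$ is an adelic point. Fix an integral model $\mathcal G$ of $G$ over $R$. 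For all but finitely many $\fp$ one has $\fp$ non-dyadic, $L_{\fp}$ unimodular, $\sigma_{\fp}\in\rO^+(L_{\fp})$, and $\mathcal G_{\widehat{R}_{\fp}}\cong\bfSpin(L_{\fp})$ with $\pi(\mathcal G(\widehat{R}_{\fp}))\subseteq\rO^+(L_{\fp})$ (the last two by spreading out the isomorphism $G\cong\bfSpin(V)$ over a dense open of $\Spec R$). For such $\fp$ the obstruction to lifting $\sigma_{\fp}$ into $\mathcal G(\widehat{R}_{\fp})=\bfSpin(L_{\fp})(\widehat{R}_{\fp})$ lies in $H^1(\widehat{R}_{\fp},\mu_2)=\widehat{R}_{\fp}^\times/\widehat{R}_{\fp}^{\times 2}$, and its image under the natural map to $H^1(F_{\fp},\mu_2)=F_{\fp}^\times/F_{\fp}^{\times 2}$ equals $\theta_{\fp}(\sigma_{\fp})=0$; since that map is injective for non-dyadic $\fp$ (a unit that is a square in $F_{\fp}$ is a square in $\widehat{R}_{\fp}$), the obstruction vanishes and we may take $\tilde\sigma_{\fp}\in\mathcal G(\widehat{R}_{\fp})$. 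Keeping arbitrary lifts at the remaining finitely many places gives $\underline{\tilde\sigma}\in G(\bfA_F)$.

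Now invoke strong approximation for $G$ over $R$. Let $\Sigma\subseteq\Spm(R)$ be a finite set large enough to contain all places that are dyadic, at which $L_{\fp}$ is non-unimodular, at which $\sigma_{\fp}\notin\rO^+(L_{\fp})$, or at which $\mathcal G_{\widehat{R}_{\fp}}\not\cong\bfSpin(L_{\fp})$; then $\tilde\sigma_{\fp}\in\mathcal G(\widehat{R}_{\fp})$ and $\pi(\mathcal G(\widehat{R}_{\fp}))\subseteq\rO^+(L_{\fp})$ for $\fp\notin\Sigma$. For $\fp\in\Sigma$ set $U_{\fp}\colonequals\tilde\sigma_{\fp}\cdot\pi^{-1}(\rO^+(L_{\fp}))$, a nonempty $\fp$-adically open subset of $G(F_{\fp})$ (because $\rO^+(L_{\fp})$ is open in $\rO^+(V_{\fp})$, $\pi$ is continuous on $F_{\fp}$-points, and left translation by $\tilde\sigma_{\fp}$ is a homeomorphism). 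Strong approximation yields $\tilde\rho\in G(F)$ with $\tilde\rho\in U_{\fp}$ for $\fp\in\Sigma$ and $\tilde\rho\in\mathcal G(\widehat{R}_{\fp})$ for $\fp\notin\Sigma$. Put $\rho\colonequals\pi(\tilde\rho)\in\rO^+(V)$. For $\fp\in\Sigma$ we get $\tilde\sigma_{\fp}^{-1}\tilde\rho\in\pi^{-1}(\rO^+(L_{\fp}))$, hence $\sigma_{\fp}^{-1}\rho=\pi(\tilde\sigma_{\fp}^{-1}\tilde\rho)\in\rO^+(L_{\fp})$; for $\fp\notin\Sigma$ both $\tilde\sigma_{\fp}$ and $\tilde\rho$ lie in $\mathcal G(\widehat{R}_{\fp})$, so $\sigma_{\fp}^{-1}\rho\in\pi(\mathcal G(\widehat{R}_{\fp}))\subseteq\rO^+(L_{\fp})$. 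Thus $\rho L_{\fp}=\sigma_{\fp}L_{\fp}=M_{\fp}$ for all $\fp$, so $\rho L=M$ and $M\in\cls^+(L)$, as required.

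I expect the second paragraph to be the main obstacle: assembling the local lifts into a genuine point of $G(\bfA_F)$, which hinges on the integrality of the lift at almost every place. This rests on matching the lifting obstruction in $\widehat{R}_{\fp}^\times/\widehat{R}_{\fp}^{\times 2}$ with the spinor norm in $F_{\fp}^\times/F_{\fp}^{\times 2}$ and on the non-dyadic injectivity of the comparison map — whose failure at dyadic places is the structural reason one can only say ``almost all'' — together with the routine check that the chosen integral model $\mathcal G$ is compatible with $L$ at almost all $\fp$. Everything else is formal manipulation, and the moral is that, once strong approximation for $G$ is granted, the classical argument of \cite[(104:5)]{OMeara00} goes through with no global-field-specific input.
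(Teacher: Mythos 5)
Your proof is correct and follows essentially the same route as the paper: reduce to the case $M=\underline{\sigma}L$ with $\underline{\sigma}\in\rO'_{\bfA}(V)$, lift through the isogeny $G\to\bfO^+(V)$ using triviality of the spinor norms, and apply strong approximation with the open sets $\pi^{-1}(\rO^+(L_{\fp}))$ to produce $\rho\in\rO^+(V)\cap\underline{\sigma}\cdot\prod_{\fp}\rO^+(L_{\fp})$. The only difference is that you spell out in detail the step the paper states in one line — that $\underline{\sigma}$ lies in the image of $G(\bfA_F)\to\rO^+_{\bfA}(V)$, which requires choosing the local lifts integrally at almost all (non-dyadic, unimodular) places — and your argument for this via the vanishing of the obstruction in $H^1(\widehat{R}_{\fp},\mu_2)$ is sound.
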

\begin{proof}
Since the first assertion also applies to any other lattice on $V$, the equality $h^+(L)=g^+(L)$ follows from their definitions.
So it suffices to prove $\cls^+(L)=\spn^+(L)$.
Consider the following exact sequence of algebraic groups
\[
1\longrightarrow \mu_2\longrightarrow G\longrightarrow \bfO^+(V)\longrightarrow 1\,.
\]
The connecting map $\bfO^+(V)(F)\to H^1(F,\,\mu_2)$ in the associated exact sequence of Galois cohomology sets coincides with the spinor norm map
$\theta: \rO^+(V)\to F^\times/F^{\times 2}$.
The same holds over each local completion $F_{\fp}$ of $F$.
So we have a commutative diagram with exact rows
\begin{equation}\label{eq3.2.1}
\begin{array}{c}
\xymatrix{
G(F) \ar[d]\ar[r] & \rO^+(V)\ar[d] \ar[rr]^{\theta} && F^\times/F^{\times 2} \ar[d]\\
G(\bfA_F) \ar[r]^{\pi} & \rO^+_{\bfA}(V) \ar[rr]^{\theta_{\bfA}} && \bfI_F/\bfI_F^2
}
\end{array}
\end{equation}

Consider a lattice $K\in\spn^+(L)$. By definition, there exists $\sigma\in\rO^+(V)$ and $\underline{\gamma}=(\gamma_{\fp})\in \rO'_{\bfA}(V)=\mathrm{Ker}(\theta_{\bfA})$ such that
$K=\sigma\underline{\gamma}L$. We want to show $K\in \cls^+(L)$. Since $\sigma^{-1}K$ is in the same proper class as $K$, we may assume $\sigma$ is the identity.

Note that $\underline{\gamma}$ lies in the image of the map $\pi: G(\bfA_F)\to \rO^+_{\bfA}(V)$. Thus, using the commutative diagram \eqref{eq3.2.1} and the strong approximation property of $G$, we can find an element $\rho\in \rO^+(V)\cap \underline{\gamma}\cdot\big(\prod_{\fp}\rO^+(L_{\fp})\big)$. Then $K=\underline{\gamma}L=\rho L\in\cls^+(L)$. This proves the theorem.
\end{proof}

\begin{coro}\label{3.3}
  Suppose that $G=\bfSpin(V)$ satisfies strong approximation over $R$, and that $L$ satisfies conditions \ref{lattice dividing property 1} and \ref{lattice dividing property 2} in \textup{Proposition}\;$\ref{2.3}$.

  Then $h^+(L)$ divides $\#(\Pic(R)/2)$ if the latter is finite $($cf. \textup{Example}$\;\ref{2.4})$.

  In particular, $h^+(L)=1$ if $R$ is a principal ideal domain.
\end{coro}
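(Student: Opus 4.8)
The plan is to simply concatenate Theorem~\ref{3.2} with Proposition~\ref{2.3}, so the argument is short. First I would apply Theorem~\ref{3.2}: since by hypothesis $G=\bfSpin(V)$ satisfies strong approximation over $R$, that theorem gives $\cls^+(M)=\spn^+(M)$ for every lattice $M$ on $V$, and in particular $h^+(L)=g^+(L)$.

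Next I would invoke Proposition~\ref{2.3}. The hypothesis that $L$ satisfies conditions \ref{lattice dividing property 1} and \ref{lattice dividing property 2} is exactly what that proposition requires, so it yields that $g^+(L)$ divides $\#(\Pic(R)/2)$ whenever the latter is finite. Combining the two relations gives $h^+(L)=g^+(L)\mid\#(\Pic(R)/2)$, which is the first assertion.

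For the final sentence, I would note that a principal ideal domain has trivial Picard group, hence $\Pic(R)/2=0$ and $\#(\Pic(R)/2)=1$; since the only positive divisor of $1$ is $1$ itself, we conclude $h^+(L)=1$.

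There is essentially no obstacle here, since all the substance has already been carried out: Theorem~\ref{3.2} does the work of deducing $\cls^+(L)=\spn^+(L)$ from strong approximation via the commutative diagram \eqref{eq3.2.1}, while Proposition~\ref{2.3} bounds $g^+(L)$ through the idelic index of Proposition~\ref{2.2} together with the identification $\bfI_F/(F^\times\prod_{\fp\in\Spm(R)}\widehat{R}_{\fp}^\times\bfI_F^2)\cong\Pic(R)/2$. The only points worth stating with care are that Proposition~\ref{2.3} needs conditions \ref{lattice dividing property 1} and \ref{lattice dividing property 2} only for $L$ itself, which is precisely what is assumed, so nothing further is required; and that the divisibility is conditional on the finiteness of $\#(\Pic(R)/2)$, which holds for instance in the situations listed in Example~\ref{2.4} and, trivially, when $R$ is a principal ideal domain.
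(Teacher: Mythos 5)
Your proposal is correct and matches the paper's proof exactly: the paper also deduces the corollary immediately by combining Theorem~\ref{3.2} (which gives $h^+(L)=g^+(L)$ under strong approximation) with Proposition~\ref{2.3} (which bounds $g^+(L)$ by $\#(\Pic(R)/2)$ under conditions \ref{lattice dividing property 1} and \ref{lattice dividing property 2}). The added remarks on the PID case and the finiteness hypothesis are accurate and consistent with the paper.
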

\begin{proof}
  Immediate from Theorem\;\ref{3.2} and Proposition\;\ref{2.3}.
\end{proof}

Recall that we have fixed a nondegenerate quadratic space  $(V,\,f)$  of dimension $\ge 3$ over $F$ and
an integral $R$-lattice $L$ on $V$.

\begin{defn}\label{3.4}
  Let $M$ be an $R$-lattice with associated quadratic form $q: FM\to F$.
  For any ring $R'$ (which need not be an integral domain) containing $R$,
  we say that $M$ \emph{is represented} by $L$ over $R'$
  if there is an $R'$-module homomorphism $\sigma: M\otimes_RR'\to L\otimes_RR'$ such that $f\circ\sigma=q$.

  We say that the representability  $M\to L$ satisfies the \emph{Hasse principle} if $M$ is represented by $L$ over $R$ as long as it is represented by $L$ over $F$ and over $\widehat{R}_{\fp}$ for every $\fp\in\Spm(R)$.

A nonzero element $a\in R$ is called \emph{represented} by $L$ over $R'$ if the free lattice $Rv$ with $Q(v)=a$ is represented by $L$ over $R'$.
\end{defn}

\begin{thm}\label{3.5}
  For any $R$-lattice $M$, the representability $M\to L$ satisfies the Hasse principle in each of the following two cases:

\begin{enumerate}
\item[\upshape(i)]
$($Class number criterion$)$ The proper class number $h^+(L)$ is $1$.

\item[\upshape(ii)]
$($Integral point criterion$)$ The group $G=\bfSpin(V)$ satisfies strong approximation over $R$, the field $F$ has cohomological $2$-dimension $\mathrm{cd}_2(F)\le 2$, and $\dim L\ge \dim M+3$.
\end{enumerate}
\end{thm}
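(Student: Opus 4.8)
The plan is to handle the two cases separately, since they rely on different mechanisms, and to reduce both to statements about lattices on a single quadratic space so that the genus-theoretic results of \S\ref{sec2} and Theorem\;\ref{3.2} can be applied.

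For case (i), I would argue as follows. Suppose $M$ is represented by $L$ over $F$ and over every $\widehat R_\fp$. Over $F$ the representation $\sigma\colon FM\hookrightarrow V$ exhibits $FM$ as a subspace of $V$; let $W$ be its orthogonal complement, so $V\cong FM\perp W$. The local hypotheses give, for each $\fp$, a representation $\sigma_\fp\colon M_\fp\to L_\fp$, and after tensoring with $F_\fp$ and using Witt's theorem over $F_\fp$ one may compare $\sigma_\fp$ and $\sigma$ up to a local proper isometry of $V_\fp$; this shows that the sublattice $\sigma(M)$ of $V$ lies, after adjusting by an adele of $\rO^+(V)$, inside $L$ locally everywhere. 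More precisely, one produces a lattice $L'$ on $V$ in the genus of $L$ — indeed in the proper class of $L$ by hypothesis (i) — such that $\sigma(M)\subseteq L'$ as lattices on $V$. Since $h^+(L)=1$ means $L'=\tau(L)$ for some $\tau\in\rO^+(V)$, the composite $\tau^{-1}\circ\sigma\colon M\to L$ is an integral representation over $R$. The one subtlety is bookkeeping at the finitely many places where $L_\fp$ need not be unimodular and where Witt cancellation over $F_\fp$ must be upgraded to a statement about lattices; this is the standard argument behind \cite[(102:5)]{OMeara00} and goes through verbatim over a general Dedekind domain.

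For case (ii), the strategy is to combine the cohomological dimension bound with strong approximation for $G$. First, the condition $\mathrm{cd}_2(F)\le 2$ together with $\dim V=\dim L\ge \dim M+3$ forces the existence of an $F$-rational representation $FM\hookrightarrow V$: this is where one invokes the Hasse principle for representations over the field $F$, which follows from $\mathrm{cd}_2(F)\le 2$ via the vanishing of the relevant $H^2$ obstruction (or, in the language of torsors, from Serre's conjecture II type results for the spin groups in question, valid under $\mathrm{cd}_2\le 2$). So the space-level representation is automatic and need not be assumed. Now fix such an embedding $FM\hookrightarrow V$ with orthogonal complement $W$, and consider the scheme $Y$ over $F$ parametrizing isometric embeddings of the lattice $M$ into $V$ whose complement is isometric to a fixed lattice on $W$; $Y$ is a homogeneous space under $G=\bfSpin(V)$ with stabilizer $\bfSpin(W)$, and it carries a natural integral model $\mathcal Y$ over $R$ built from $M$ and $L$. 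The local representability hypotheses say precisely that $\mathcal Y(\widehat R_\fp)\neq\emptyset$ for all $\fp$, and $Y(F)\neq\emptyset$ by the space-level representation; assembling these gives an adelic point of $Y$ lying in $\prod_\fp Y(\widehat R_\fp)$. Strong approximation for $G$ — transported to $Y$ via the fibration $G\to Y$, using that the fiber $\bfSpin(W)$ is connected — then shows $Y(F)$ is dense in $Y(\bfA_F)$, hence meets $\prod_\fp Y(\widehat R_\fp)$, which by \S\ref{3.1} yields an $R$-point of $\mathcal Y$, i.e.\ an integral representation $M\to L$ over $R$.

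The main obstacle in case (ii) is the transfer of strong approximation from the group $G$ to the homogeneous space $Y$: one needs $Y$ to have an integral model with connected fibers over $R$ and to know that the image of $G(\bfA_F)$ in $Y(\bfA_F)$ is both open and dense, which requires controlling the local orbits $\rO^+(L_\fp)\cdot\sigma_\fp$ inside $Y(\widehat R_\fp)$ — equivalently, knowing that any two local representations with the same complement lattice are related by an element of $\rO^+(L_\fp)$, at least at all but finitely many $\fp$. This local homogeneity is exactly the lattice-theoretic input (essentially the local Witt theorem for $\widehat R_\fp$-lattices at non-dyadic places where $L_\fp$ is unimodular), and handling the remaining finitely many bad places by enlarging the approximation set $\Sigma$ is where the argument needs care but no new idea. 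Case (i) has no comparable obstacle; it is a direct consequence of $h^+(L)=1$ once the genus-level representation has been produced.
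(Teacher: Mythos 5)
Your part (i) matches the paper's argument exactly: local plus rational representability yields a representation by some lattice in $\gen(L)$ (O'Meara (102:5)), and $h^+(L)=1$ collapses the genus to the proper class of $L$. Your part (ii) also follows the paper's route in outline (orthogonal complement, homogeneous space $G/H$ with $H$ the spin group of the complement, integral model as in Colliot-Th\'el\`ene--Xu, transfer of strong approximation from $G$ to the quotient). But there is a genuine gap in how you justify the transfer step, and it is precisely the step where the two hypotheses of (ii) are actually consumed. You assert that strong approximation passes from $G$ to $Y=G/H$ ``using that the fiber is connected.'' Connectedness of $H$ is not enough. The mechanism is the exact sequence $G(F_{\fp})\to Y(F_{\fp})\to H^1(F_{\fp},H)$: one needs $G(F_{\fp})$ to act transitively on $Y(F_{\fp})$ at \emph{every} place, i.e.\ $H^1(F_{\fp},H)=1$, so that an adelic point of $Y$ lifts (up to a rational translate) to an adelic point of $G$ on which strong approximation can be applied. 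This vanishing is exactly what $\mathrm{cd}_2(F)\le 2$ buys, via the theorem of Bayer-Fluckiger and Parimala for the semisimple simply connected group $H$ --- and $H=\bfSpin$ of the complement is simply connected only because $\dim L-\dim M\ge 3$. Your proposal never invokes $H^1(F_{\fp},H)=1$, and your ``obstacle'' paragraph addresses only the integral orbit structure at almost all places, which is a different (and more routine) issue; so as written the two hypotheses of (ii) are left unused where they matter.

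A second, smaller problem: you claim that $\mathrm{cd}_2(F)\le 2$ makes the $F$-rational representation $FM\hookrightarrow V$ automatic from the local data. That is both unjustified and unnecessary. Unjustified because local representability at the places of $\Spm(R)$ does not in general control representability over $F$ (the set $\Spm(R)$ can miss places relevant to the quadratic form theory of $F$; compare Remark~\ref{4.3}, where the obstruction lives at a place of $F$ outside $\Spm(R)$). Unnecessary because the Hasse principle as defined in Definition~\ref{3.4} already takes representability over $F$ as a hypothesis, which is all the paper's proof uses. Redirect the hypothesis $\mathrm{cd}_2(F)\le 2$ to the vanishing of $H^1(F_{\fp},H)$, drop the claim about automatic $F$-representability, and your argument becomes the paper's.
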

\begin{proof}
\hfill
\begin{enumerate}
\item[\upshape(i)]
If $M$ is represented by $L$ over $F$ and over every local completion $\widehat{R}_{\fp}$, then $M$ is represented by some lattice $L'$ in the genus of $L$ (cf. \cite[(102:5)]{OMeara00}). The assumption $h^+(L)=1$ implies that $\gen(L)=\cls^+(L)$, whence $L'\cong L$.

\item[\upshape(ii)]
Let $(W,\,q)$ denote the quadratic space spanned by $M$. Assuming  $M$  represented by $L$ over $F$ means that $(W,\,q)$ can be embedded as a subspace of $(V,\,f)$. By the Witt theory of quadratic spaces over fields, we can find an orthogonal decomposition $(V,\,f)\cong (W,\,q)\perp (U,\,\varphi)$. Let $H$ be the spin group of the space $(U,\,\varphi)$ and $X\colonequals G/H$. As explained in \cite[\S\S\,5 and 7]{ColliotXu09Compositio}, the $F$-variety $X$ has an integral model $\mathcal{X}$ over $R$ with the property that $M$ is represented by $L$ over $R$ if and only if $\mathcal{X}(R)\neq\emptyset$. Suppose that $M$ is represented by $L$ over $\widehat{R}_{\fp}$ for all $\fp\in\Spm(R)$. Then, as we have said in \eqref{3.1}, $M$ is represented by $L$ over $R$ if $X$ satisfies strong approximation over $R$.

When $\dim U=\dim V-\dim W\ge 3$, the group $H$ is semisimple simply connected.
The assumption $\mathrm{cd}_2(F)\le 2$ implies that $H^1(F_{\fp},\,H)=1$ for all $\fp\in\Spm(R)$ (cf. \cite{BP1}).
A diagram chase as in the proof of \cite[Thm.\;3.4]{Colliot18EurJmath} shows that $X$ satisfies strong approximation over $R$ as $G$ does.
This completes the proof.
\qedhere
\end{enumerate}
\end{proof}

\begin{remark}\label{3.6}
The integral point criterion in Theorem\;\ref{3.5} (ii) has a variant as follows.
\emph{The group $G=\bfSpin(V)$ satisfies strong approximation over $R$, $\dim L=\dim M+2$, and $-\det(V)\det(FM)$ is a square in $F$.}

Indeed, in the above case the group $H$ is isomorphic to the split torus $\mathbb{G}_m$ and hence  $H^1(F_{\fp},\,H)=1$ for all $\fp\in\Spm(R)$.
\end{remark}

\begin{coro}\label{3.7}
  Suppose that the quadratic space $(V,\,f)$ is isotropic.

  \begin{enumerate}[label={{\upshape(\arabic*)}}]
    \item Suppose that $L$ is locally universal at every $\fp\in\Spm(\widehat{R}_{\fp})$, i.e., $L_{\fp}$ represents all the nonzero elements of $\widehat{R}_{\fp}$.

    Then $h^+(L)$ divides $\#(\Pic(R)/2)$ if the latter is finite $($cf. Example$\;\ref{2.4})$.

    If $R$ is a principal ideal domain, then for every $R$-lattice $M$, the representability $M\to L$ satisfies the Hasse principle.
    \item Suppose that $\mathrm{cd}_2(F)\le 2$. Then for every $R$-lattice $M$ with $\dim M\le\dim L-3$, the representability $M\to L$ satisfies the Hasse principle.
  \end{enumerate}
\end{coro}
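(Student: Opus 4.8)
The plan is to obtain both parts as consequences of the material already developed, the one genuinely new ingredient being the fact recalled in the introduction that $G=\bfSpin(V)$ satisfies strong approximation over $R$ whenever $V$ is isotropic (this applies here since $\dim V\ge 3$). Granting this, Theorem\;\ref{3.2} immediately gives $\cls^+(L)=\spn^+(L)$ and $h^+(L)=g^+(L)$, so for part (1) it remains only to bound $g^+(L)$ through Proposition\;\ref{2.3} / Corollary\;\ref{3.3}, and for part (2) it remains only to feed the data into Theorem\;\ref{3.5}(ii).

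For part (1), the first step is to verify hypotheses \ref{lattice dividing property 1} and \ref{lattice dividing property 2} of Proposition\;\ref{2.3}. Hypothesis \ref{lattice dividing property 1} comes for free from isotropy: an isotropic $V$ contains a hyperbolic plane, whose $\rO^+$ is $\cong F^\times$ with spinor norm the tautological surjection onto $F^\times/F^{\times2}$; hence $\theta\colon\rO^+(V)\to F^\times/F^{\times2}$ is surjective. For hypothesis \ref{lattice dividing property 2}, fix $\fp\in\Spm(R)$ and a unit $u\in\widehat{R}_{\fp}^\times$; by local universality $L_{\fp}$ contains vectors $x,y$ with $f(x)=u$ and $f(y)=1$. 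Because $f(x),f(y)$ are units and $L$ is integral, the reflections $\tau_x,\tau_y$ stabilise $L_{\fp}$, so $\tau_x\tau_y\in\rO^+(L_{\fp})$ and $\theta_{\fp}(\tau_x\tau_y)=f(x)f(y)\equiv u$ modulo squares; thus $\theta_{\fp}(\rO^+(L_{\fp}))\supseteq\widehat{R}_{\fp}^\times$. Corollary\;\ref{3.3} then yields $h^+(L)\mid\#(\Pic(R)/2)$ when the latter is finite; if $R$ is a principal ideal domain this gives $h^+(L)=1$, and Theorem\;\ref{3.5}(i) delivers the Hasse principle for every representation $M\to L$.

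For part (2), no further preparation is needed: $G$ satisfies strong approximation over $R$ by isotropy of $V$, $\mathrm{cd}_2(F)\le 2$ by hypothesis, and $\dim L=\dim V\ge\dim M+3$; so Theorem\;\ref{3.5}(ii) applies verbatim and produces the Hasse principle for $M\to L$.

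The only step I expect to require real care is the verification of \ref{lattice dividing property 2}: one has to observe that local universality is used only through representability of units, check that the reflections $\tau_x,\tau_y$ genuinely preserve the local lattice (the coefficients appearing in the reflection formula lie in $\widehat{R}_{\fp}$ precisely because $L$ is integral and $f(x),f(y)$ are units, so there is no dyadic obstruction), and recall that the spinor norm of a product of two reflections is the product of the two values. Apart from this point, the corollary is a direct assembly of Proposition\;\ref{2.3}, Corollary\;\ref{3.3} and Theorem\;\ref{3.5}, together with the known strong approximation for isotropic spin groups.
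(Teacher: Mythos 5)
Your proposal is correct and follows essentially the same route as the paper: strong approximation for $\bfSpin(V)$ from isotropy (the paper justifies this via $F$-rationality of $G$ and a result of Gille), surjectivity of the spinor norm from isotropy, verification of condition (ii) of Proposition\;\ref{2.3} via a product of two symmetries at vectors representing $1$ and the given unit, and then assembly through Corollary\;\ref{3.3} and Theorem\;\ref{3.5}. The only difference is cosmetic: you spell out why the reflections preserve $L_{\fp}$ and why the spinor norm is surjective, which the paper leaves implicit.
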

\begin{proof}
  The spinor norm map $\theta: \rO^+(V)\to F^\times /F^{\times 2}$ is surjective since $f$ is isotropic. On the other hand, the group $G=\bfSpin(V)$ is $F$-rational by \cite{Platonov79Dokl}. Hence $G$ satisfies strong approximation over $R$ by \cite[Cor.\;5.11]{Gille09SemBourbaki}.

  Assertion (2) is thus an immediate consequence of the integral point criterion in Theorem\;\ref{3.5}.

  To prove (1), suppose that $L$ is locally universal. Let $\fp\in\Spm(R)$ and $\alpha\in \widehat{R}_{\fp}^{\times}$. Then there exist vectors $x,\,y\in L_{\fp}$ such that $f(x)=1$ and $f(y)=\alpha$. The product of the two symmetries $\tau_x$ and $\tau_y$ belongs to the subgroup $\rO^+(L_{\fp})$ and we have $\theta_{\fp}(\tau_x\tau_y)=\alpha$. So Condition (ii) of Proposition\;\ref{2.3} holds.

  The first assertion now follows from Corollary\;\ref{3.3}, and the second assertion holds by the class number criterion in Theorem\;\ref{3.5}.
\end{proof}

\begin{coro}\label{3.8}
Let $k$ be a finite extension of $\mathbb{R}(t)$ or of $\mathbb{R}(\!(t)\!)$.
Let $R=k[C]$ be the coordinate ring of an irreducible normal affine curve $C$ over $k$.
Suppose that the function field $F=k(C)$ is nonreal $($i.e., $-1$ is a sum of finitely many squares in $F)$.

\begin{enumerate}[label={{\upshape(\arabic*)}}]
  \item Every integral quadratic form $g$ over $R$ in $n\ge 2$ variables can be written as a sum of $n+3$ squares of linear forms over $R$ $($in the same set of variables as $g)$.
  \item Assume further that $R $ is a principal ideal domain. Let $L$ be an integral $R$-lattice of dimension $\ge 5$.

   Then $L$ is globally universal over $R$ $($i.e., represents all the nonzero elements of $R)$ if and only if it is locally universal.
\end{enumerate}
\end{coro}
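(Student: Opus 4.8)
The plan is to obtain both statements from Corollary~\ref{3.7}, so I first collect the field-theoretic inputs. Since $F$ is nonreal, the Artin--Schreier theorem shows the absolute Galois group of $F$ has no element of order $2$, hence $\mathrm{cd}_2(F)=\mathrm{cd}_2\bigl(F(\sqrt{-1})\bigr)$; and $F(\sqrt{-1})=k(\sqrt{-1})(C)$ is a function field in one variable over $k(\sqrt{-1})$, which is a finite extension of $\C(t)$ or $\C(\!(t)\!)$ and so has cohomological dimension $\le 1$, whence $\mathrm{cd}_2(F)\le 2$. Consequently $H^3(F,\Z/2)=0$, so the $3$-fold Pfister form $\langle 1\rangle^{\perp 8}$, whose Arason invariant is the symbol $(-1)\cup(-1)\cup(-1)$, is hyperbolic; thus $-1$ is a sum of $4$ squares in $F$ and $\langle 1\rangle^{\perp m}$ is isotropic over $F$ for every $m\ge 5$. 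Finally, and this is the crucial point, $u(F)\le 4$: the field $F(\sqrt{-1})$ is a function field in one variable over the $C_1$-field $k(\sqrt{-1})$, hence has $u$-invariant at most $4$, and a descent along the quadratic extension $F(\sqrt{-1})/F$ of the nonreal field $F$ transports this bound to $F$.

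By the same token, for each $\fp\in\Spm(R)$ the field $F_{\fp}$ contains $F$ and is therefore nonreal, so $\kappa(\fp)$ is nonreal; since $\kappa(\fp)(\sqrt{-1})$ is a finite extension of $\C(t)$ or $\C(\!(t)\!)$ we get $\mathrm{cd}_2(\kappa(\fp))\le 1$, hence $H^2(\kappa(\fp),\Z/2)=0$ and $-1$ is a sum of $2$ squares in $\kappa(\fp)$. In particular $\langle 1\rangle^{\perp 3}$ is isotropic over $\widehat R_{\fp}$, and a standard local representation argument over the non-dyadic complete discrete valuation ring $\widehat R_{\fp}$ — peel off rank-one orthogonal summands, using the hyperbolic planes split off from $\langle 1\rangle^{\perp m}$ together with the fact that every unimodular form of rank $\ge 3$ over $\kappa(\fp)$ is universal — shows that $\langle 1\rangle^{\perp m}$ represents every $\widehat R_{\fp}$-lattice of rank $\le m-3$.

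Now for part~(1), set $M=(R^{\,n},g)$ and $L=(R^{\,n+3},\,\text{sum of squares})$, so that $V\colonequals FL=\langle 1\rangle^{\perp(n+3)}$ is isotropic (because $n+3\ge 5$) and $\dim M=\dim L-3$. By Corollary~\ref{3.7}(2) it is enough to check that $M$ is represented by $L$ over $F$ and over each $\widehat R_{\fp}$. The local representations hold by the preceding paragraph. Over $F$, the form $V\perp(-FM)$ has odd dimension $2n+3$, so its anisotropic kernel has odd dimension, and since $u(F)\le 4$ this dimension is $\le 3$; hence the Witt index of $V\perp(-FM)$ is $\ge n$ and $FM$ embeds isometrically into $V$. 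Therefore $g$ is represented by $L$ over $R$, i.e. $g$ is a sum of $n+3$ squares of linear forms over $R$. For part~(2), one implication being trivial, suppose $L$ is locally universal with $\dim L\ge 5$; then $V=FL$ is isotropic since $u(F)\le 4$, so Corollary~\ref{3.7}(1) applies, and as $R$ is a principal ideal domain $\#(\Pic(R)/2)=1$, giving $h^+(L)=1$ and the Hasse principle for $\langle a\rangle\to L$ for every nonzero $a\in R$. Because an isotropic space of dimension $\ge 2$ represents every element of $F$, and because $L$ is locally universal, $\langle a\rangle$ is represented by $L$ over $F$ and over every $\widehat R_{\fp}$, hence over $R$; as $a$ was arbitrary, $L$ is globally universal over $R$.

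The essential obstacle is the bound $u(F)\le 4$ for nonreal $F$ — concretely, the descent of the $u$-invariant bound (equivalently of the $C_2$-property) from $F(\sqrt{-1})$ to $F$ — since without it the quadratic spaces $V$ occurring above need not be isotropic and Corollary~\ref{3.7} cannot be applied. The local representation property of $\langle 1\rangle^{\perp m}$ over $\widehat R_{\fp}$ is a secondary, routine matter once $-1$ is known to be a sum of two squares in $\kappa(\fp)$.
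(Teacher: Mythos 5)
Your overall strategy is the same as the paper's: establish $u(F)\le 4$ and $\mathrm{cd}_2(F)\le 2$, deduce isotropy of the relevant spaces, and then feed everything into Corollary~\ref{3.7}, with the local representability handled by an O'Meara-type result over the non-dyadic complete local rings $\widehat R_{\fp}$. The derivation of $\mathrm{cd}_2(F)\le 2$ (Artin--Schreier plus Serre's theorem that $\mathrm{cd}_2$ is insensitive to the extension $F(\sqrt{-1})/F$ when the absolute Galois group is $2$-torsion-free), the local analysis of $\kappa(\fp)$, and the Witt-index computation over $F$ are all fine.

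The genuine gap is the step you yourself flag as ``the crucial point'': the claim that $u(F)\le 4$ follows from $u(F(\sqrt{-1}))\le 4$ by ``a descent along the quadratic extension $F(\sqrt{-1})/F$ of the nonreal field $F$.'' Unlike cohomological dimension, the $u$-invariant (equivalently the $C_2$-property for quadratic forms) does \emph{not} descend along $F(\sqrt{-1})/F$ by any general or routine argument, and you give none. Indeed, for $k$ a finite extension of $\R(t)$ the field $F$ is the function field of a surface over $\R$; that every \emph{nonreal} such field has $u$-invariant $4$ was a long-standing open problem, even though $F(\sqrt{-1})$ has had $u\le 4$ by Tsen--Lang all along, and it was settled only by Benoist \cite{Benoist19PIHES} using the real integral Hodge conjecture. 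Likewise, for $k$ a finite extension of $\R(\!(t)\!)$ the bound $u(F)\le 4$ requires the patching/local-global machinery of \cite{CTPS12CMH}; the naive bound coming from residue function fields fails because those residue fields can be formally real with infinite $u$-invariant. The paper cites exactly these two theorems at this point. Without one of them your proof does not get off the ground: the isotropy of $V=FL$ (needed even to invoke Corollary~\ref{3.7}) and the embedding of $FM$ into $V$ both rest on $u(F)\le 4$. A secondary, much smaller issue: your local step asserts that $\langle 1\rangle^{\perp m}$ represents every $\widehat R_{\fp}$-lattice of rank $\le m-3$ via a ``peel off summands'' sketch; since the lattice $(R^n,g)$ need not be unimodular locally, this really should be the cited theorem of O'Meara on representations by unimodular lattices over non-dyadic local rings, as in the paper.
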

\begin{proof}
The field $F$ has $u$-invariant $\le 4$, that is, every quadratic form in at least 5 variables over $F$ is isotropic. If $k$ is a finite extension of $\mathbb{R}(\!(t)\!)$, this can be deduced from the Hasse principle given in \cite[Thm.\;3.1]{CTPS12CMH}. If $k$ is a finite extension of $\mathbb{R}(t)$, this was proved in \cite[Thm.\;0.10]{Benoist19PIHES}.

Assertion (2) is thus immediate from Corollary\;\ref{3.7} (1).

To prove (1), let $I_{n+3}$ denote the integral $R$-lattice defined on the $R$-module $R^{\oplus (n+3)}$ by  the quadratic form $x_1^2+\cdots+x_{n+3}^2$. By Corollary\;\ref{3.7} (2), the representability $g\to I_{n+3}$ satisfies the Hasse principle. So it is sufficient to check that $g$ is represented by $I_{n+3}$ over $\widehat{R}_{\fp}$ for all $\fp\in\Spm(R)$. By \cite[p.~850, Thm.\;1]{OMeara58AJM} (whose proof does not rely on the finite residue field assumption), this follows from the fact that the space $F_{\fp}I_{n+3}$ represents all $n$-ary spaces over $F_{\fp}$.
\end{proof}

According to Mordell \cite{Mordell30} and Ko \cite{Ko37}, Corollary\;\ref{3.8} (1) has the following analogue: \emph{For $2\le n\le 5$, every positive definite quadratic form over $\mathbb{Z}$ in $n$ variables can be written as a sum of $n+3$ squares of integral linear forms. }

\begin{coro}\label{3.9}
Let $k$ be a number field or a $p$-adic field.
Let $R=k[C]$ be the coordinate ring of an irreducible normal affine curve $C$ over $k$.
Suppose that the function field $F=k(C)$ is nonreal $($which is always the case if $k$ is $p$-adic$)$.

\begin{enumerate}[label={{\upshape(\arabic*)}}]
\item
For every integer $m\ge 5$, the $R$-lattice $I_m$ has a finite proper class number $h^+(I_m)$.

\item
Suppose that $R$ is a principal ideal domain.
Every integral quadratic form in $n\ge 1$ variables over $R$ can be written as a sum of $n+7$ squares of linear forms over $R$.
\end{enumerate}
\end{coro}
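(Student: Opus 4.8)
The plan is to derive both assertions from Corollary~\ref{3.7}(1); the only new inputs needed are that the level of $F$ satisfies $s(F)\le 4$ and that $I_m$ is locally universal for every $m\ge 5$. Granting these, part~(1) is immediate: $\Pic(R)/2$ is finite by Example~\ref{2.4} (case~(2) if $k$ is a number field, as number fields are finitely generated over $\Q$; case~(3) if $k$ is $p$-adic), the space $FI_m=\langle 1\rangle^{\oplus m}$ is isotropic for $m\ge 5$ since $s(F)\le 4$, and $I_m$ is locally universal, so Corollary~\ref{3.7}(1) gives $h^+(I_m)\mid\#(\Pic(R)/2)<\infty$. For part~(2) take $L=I_{n+7}$: then $FL$ is isotropic because $s(F)\le 4<n+7$, and $L$ is locally universal, so Corollary~\ref{3.7}(1) applies over the principal ideal domain $R$ and the representability $M\to I_{n+7}$ satisfies the Hasse principle, $M$ being the lattice of the given form $g$. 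It then suffices to represent $M$ by $I_{n+7}$ over $F$ and over every $\widehat R_\fp$; by Definition~\ref{3.4} this produces the desired expression of $g$ as a sum of $n+7$ squares of linear forms over $R$.

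I next record the behaviour over the completions. Since $F$ is nonreal, each $F_\fp$ is nonreal, and since $\car(k)=0$ the residue field $\kappa(\fp)$ has characteristic $0$; hence every $\fp$ is non-dyadic and $F_\fp\cong\kappa(\fp)(\!(t)\!)$ by Cohen's theorem, with $\kappa(\fp)$ a nonreal number field or a $p$-adic field, so $u(\kappa(\fp))\le 4$ and $s(\kappa(\fp))\le 4$. Springer's theorem then yields $s(F_\fp)\le 4$ and $u(F_\fp)\le 8$. In particular $\langle 1\rangle^{\oplus 5}$ is isotropic, so universal, over every $F_\fp$, whence $F_\fp(I_m)$ is universal for $m\ge 5$; as $(I_m)_\fp$ is unimodular, O'Meara's local representation theory (\cite[pp.~850, Thm.~1]{OMeara58AJM}, used just as in the proof of Corollary~\ref{3.8}) shows that $(I_m)_\fp$ represents every nonzero element of $\widehat R_\fp$, i.e.\ $I_m$ is locally universal. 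For the remaining local conditions in part~(2): for an $n$-dimensional $F_\fp$-space $W$, the form $\langle 1\rangle^{\oplus(n+7)}\perp(-W)$ has odd dimension $2n+7$ and anisotropic part of dimension $\le u(F_\fp)\le 8$, hence $\le 7$, so its Witt index is $\ge n$ and $\langle 1\rangle^{\oplus(n+7)}$ represents $W$; taking $W=F_\fp M$ and invoking O'Meara once more ($(I_{n+7})_\fp$ being unimodular of rank $\ge\dim M_\fp+3$) shows that $(I_{n+7})_\fp$ represents $M_\fp$ for every $\fp$.

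Two global statements about $F$ then remain: $s(F)\le 4$, and (for part~(2)) that $FM$ embeds isometrically into $\langle 1\rangle^{\oplus(n+7)}$ --- equivalently, by the same odd-dimension Witt-index count, that $u(F)\le 8$. When $k$ is $p$-adic both are at hand: $u(k)\le 4$ makes $\langle 1\rangle^{\oplus 5}$ isotropic already over $k\subseteq F$, and $u(k(C))\le 8$ is known for function fields of $p$-adic curves (\cite{PS10}; see also \cite{HHK},~\cite{CTPS12CMH}). When $k$ is a number field neither is automatic --- for instance $\langle 1\rangle^{\oplus 5}$ is anisotropic over $k=\Q$ --- and here the nonreality of $F$ is used: by a local--global principle for isotropy of quadratic forms of dimension $\ge 3$ over $F=k(C)$ relative to the places of $k$ (in the spirit of \cite{LPS12}), one reduces $s(F)\le 4$ and $u(F)\le 8$ to the same statements over each $k_w(C)$; these hold since $k_w(C)$ is again a nonreal function field of a curve, over a $p$-adic field when $w$ is finite, and over $\R$ or $\C$ when $w$ is archimedean --- in the latter case $\mathrm{cd}_2(k_w(C))\le 1$, so $u(k_w(C))\le 2$, the real places causing no trouble precisely because $k_w(C)$ inherits nonreality from $F$.

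I expect the global input over $F$ in the number-field case to be the real obstacle: it cannot be reduced to a statement over the base field $k$, so an arithmetic local--global argument over $k(C)$ is unavoidable, and it is the only place where the hypothesis that $F$ is nonreal does essential work (by removing the obstruction from the real embeddings of $k$). The remaining ingredients --- local universality, representability over each $\widehat R_\fp$, and the bookkeeping with Witt indices and with $\Pic(R)/2$ --- are routine once O'Meara's local theorems, Corollary~\ref{3.7} and the results of \S\ref{sec2} are in place, just as in the proof of Corollary~\ref{3.8}.
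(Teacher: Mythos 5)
Your route is the same as the paper's: both assertions are deduced from Corollary~\ref{3.7}(1); local universality of $I_m$ comes from $F_\fp\cong\kappa(\fp)(\!(t)\!)$ with $\kappa(\fp)$ a nonreal number field or a $p$-adic field, Springer's theorem, and O'Meara's local representation theorem; and the local representability of $M$ by $I_{n+7}$ comes from $u(F_\fp)\le 8$. Your first two paragraphs are essentially the paper's proof written out in more detail, and they are correct.

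Where you go beyond the paper is in isolating the two \emph{global} inputs over $F$: that $s(F)\le 4$ (so $FI_m$ is isotropic for $m\ge 5$) and, for part~(2), that every $n$-ary $F$-space embeds in $\langle 1\rangle^{\oplus(n+7)}$, which as you observe amounts to $u(F)\le 8$. You are right that both are genuinely required: the Hasse principle of Definition~\ref{3.4} demands representability over $F$ as well as over every $\widehat R_\fp$, and the paper's proof only asserts the isotropy of $I_m$ over $F$ and never addresses the representability of $M$ by $I_{n+7}$ over $F$. The problem is your proposed justification. In the $p$-adic case everything is fine ($u(k(C))=8$ is known, cf.~\cite{PS10}). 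In the number-field case, however, a local--global principle for isotropy over $k(C)$ relative to the places of $k$ --- even just for forms of dimension $9$, which is what $u(F)\le 8$ needs --- is not an established theorem: it is essentially Colliot-Th\'el\`ene's conjecture for quadrics, and the bound $u(k(C))\le 8$ for nonreal $k(C)$ is, to my knowledge, known only conditionally on that conjecture. The level bound can be rescued without it: $5\times\langle 1\rangle$ is a Pfister neighbour of $\langle\langle -1,-1,-1\rangle\rangle=\langle 1\rangle^{\oplus 8}$, so $s(F)\le 4$ is equivalent to the vanishing of the symbol $(-1)\cup(-1)\cup(-1)$ in $H^3(F,\Z/2\Z)$, and that symbol does satisfy a Hasse principle by Kato's theorem for function fields of curves over number fields. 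Hence part~(1) goes through, as does part~(2) for $n\le 4$, since then $\langle 1\rangle^{\oplus(n+7)}\supseteq\langle 1\rangle^{\oplus 8}$, which is hyperbolic and therefore represents every form of dimension $\le 4$. But for part~(2) with $n\ge 5$ over a number field the embedding of $FM$ into $\langle 1\rangle^{\oplus(n+7)}$ remains unjustified in your argument --- and, it must be said, the paper's own proof supplies nothing at this point either, so you have put your finger on a real gap rather than created one.
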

\begin{proof}
We only consider the case where $k$ is a number field, the $p$-adic case being completely analogous.

\begin{enumerate}[label={{\upshape(\arabic*)}}]
\item
For $m\ge 5$, the  quadratic form $I_m$ is isotropic over $F$. For every $\fp\in\Spm(R)$, the completion $F_{\fp}$ is isomorphic to $k'(\!(t)\!)$, where $k'$ is a finite extension of $k$. Since $F$ is nonreal, the field $F_{\fp}$ is also nonreal. This means that $k'$ is a nonreal number field. This implies that over $\widehat{R}_{\fp}\cong k'[\![t]\!]$ the lattice $I_m$ represents all the elements of $\widehat{R}_{\fp}$. Hence, by Corollary\;\ref{3.7} (1) we have $h^+(I_m)\,|\,\#(\Pic(R)/2)$ (and we have seen the finiteness of $\#(\Pic(R)/2)$ in Example\;\ref{2.4}).

\item
Notice that for each $n\ge 1$, the representability by the lattice $I_{n+7}$ satisfies the Hasse principle by Corollary\;\ref{3.7} (1). For each $\fp\in\Spm(R)$, the local completion $F_{\fp}$ has $u$-invariant $\le 8$. So the quadratic space $F_{\fp}I_{n+7}$ represents all $n$-ary quadratic spaces over $F_{\fp}$. As in the proof of Corollary\;\ref{3.8} (1), the result now follows by applying \cite[p.~850, Thm.\;1]{OMeara58AJM}.
\qedhere
\end{enumerate}
\end{proof}

\section{A counterexample}\label{sec5}

In this section, we give an example where the Hasse principle for representations of integral quadratic forms fails. As a consequence, we also obtain a counterexample where a spin group does not satisfy strong approximation.

\

We fix a field $k$ of characteristic $\neq 2$.  Let $g\in k[x]$ be a monic polynomial of even degree and let $t\in k^*$ be a non-square element. Put $a_1=1$, $a_2=-t$, $a_3=xt$ and $a_4=-xg$.

\begin{lemma}\label{4.1}
 Let $\alpha_1,\cdots,\alpha_4\in k[x]$, and put $\varphi=\sum^4_{i=1}a_i\alpha_i^2$.

\begin{enumerate}[label={{\upshape(\arabic*)}}]
    \item We have $\deg(\varphi)=\max\limits_{1\le i\le 4}\{\deg(a_i\alpha_i^2)\}$.

    \item If $\deg(g)>1$, then $\varphi\neq x$.
\end{enumerate}
\end{lemma}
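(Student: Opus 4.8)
The plan is to rule out cancellation of leading terms by a parity argument and then invoke the non-squareness of $t$.

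\emph{Part (1).} If all the $\alpha_i$ vanish the claim is trivial, so assume not, and set $d=\max_{1\le i\le 4}\deg(a_i\alpha_i^2)$, a nonnegative integer. Let $I\subseteq\{1,2,3,4\}$ be the (nonempty) set of indices for which $\deg(a_i\alpha_i^2)=d$; each such $i$ has $\alpha_i\neq0$. Writing $\ell_i\in k^\times$ for the leading coefficient of $\alpha_i$ when $i\in I$, the leading coefficient of $a_i\alpha_i^2$ equals $\ell_1^2,\,-t\ell_2^2,\,t\ell_3^2,\,-\ell_4^2$ for $i=1,2,3,4$ respectively — here one uses that $a_4=-xg$ with $g$ monic. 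The coefficient of $x^d$ in $\varphi$ is the sum of these terms over $i\in I$, so it suffices to prove this sum is nonzero. Now $\deg(a_1\alpha_1^2)$ and $\deg(a_2\alpha_2^2)$ are even, whereas $\deg(a_3\alpha_3^2)=1+2\deg\alpha_3$ and $\deg(a_4\alpha_4^2)=(1+\deg g)+2\deg\alpha_4$ are odd because $\deg g$ is even; since all indices in $I$ yield the same degree $d$, we conclude $I\subseteq\{1,2\}$ or $I\subseteq\{3,4\}$. If $|I|=1$ the coefficient of $x^d$ is a single nonzero term. If $I=\{1,2\}$, then its vanishing, $\ell_1^2=t\ell_2^2$, would make $t=(\ell_1/\ell_2)^2$ a square in $k$; if $I=\{3,4\}$, then $t\ell_3^2=\ell_4^2$ would make $t=(\ell_4/\ell_3)^2$ a square. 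Both contradict the hypothesis on $t$, so $\deg\varphi=d$.

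\emph{Part (2).} Suppose $\varphi=x$. By Part (1) we have $\max_{1\le i\le 4}\deg(a_i\alpha_i^2)=1$. Since $\deg g$ is even and $>1$, we have $\deg(a_4)=1+\deg g\ge3$, so the term $a_4\alpha_4^2$ must vanish, i.e. $\alpha_4=0$; likewise each of $\alpha_1,\alpha_2,\alpha_3$ must be a constant. Hence $\varphi=(\alpha_1^2-t\alpha_2^2)+t\alpha_3^2\,x$ with all $\alpha_i\in k$, and $\varphi=x$ forces $t\alpha_3^2=1$, so $t=\alpha_3^{-2}$ is a square in $k$ — a contradiction. Therefore $\varphi\neq x$.

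There is no serious obstacle here; the only point requiring care is the bookkeeping in Part (1) — using the even/odd splitting of the four degrees to forbid cross-cancellation between the groups $\{1,2\}$ and $\{3,4\}$, and invoking the fact that the leading coefficient of a square is a square only for indices in $I$, where the corresponding $\alpha_i$ is genuinely nonzero.
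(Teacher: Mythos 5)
Your proof is correct and follows essentially the same route as the paper's: the even/odd parity of $\deg(a_i\alpha_i^2)$ (using that $g$ is monic of even degree) separates the indices into the groups $\{1,2\}$ and $\{3,4\}$, and the non-squareness of $t$ rules out cancellation of leading coefficients within either group; part (2) is likewise the same reduction to $\alpha_4=0$ followed by the observation that $t\alpha_3^2=1$ would make $t$ a square.
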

\begin{proof}
Without loss of generality, we may assume that the $\alpha_i$ are not all 0. Put $M=\max\limits_{1\le i\le 4}\{\deg(a_i\alpha_i^2)\}\ge 0$ and  let $\varepsilon_i\in k$  be the leading coefficient of $\alpha_i$ for each $i$.
\begin{enumerate}[label={{\upshape(\arabic*)}}]
\item
To show that $\deg(\varphi)=M$, we may assume that $\deg(a_i\alpha_i^2)=M$ for at least two indices $i\in \{1,2,3,4\}$.
By construction of the $a_i$,
$\deg(a_1\alpha_1^2)$ and $\deg(a_2\alpha_2^2)$ are even,
while $\deg(a_3\alpha_3^2)$ and $\deg(a_4\alpha_4^2)$ are odd.
So we may assume that either $M=\deg(a_1\alpha_1^2)=\deg(a_2\alpha_2^2)$ or $M=\deg(a_3\alpha_3^2)=\deg(a_4\alpha_4^2)$.

If $M=\deg(a_1\alpha_1^2)=\deg(a_2\alpha_2^2)$, then $\varepsilon_1\varepsilon_2\neq 0$, and since $t$ is not a square in $k$, we have
\[
a_1\varepsilon^2_1+a_2\varepsilon_2^2=\varepsilon^2_1-t\varepsilon_2^2\neq 0\,.
\]
So the leading term of $\varphi$ is $(\varepsilon^2_1-t\varepsilon_2^2)x^M$ in this case. Similarly, if $M=\deg(a_3\alpha_3^2)=\deg(a_4\alpha_4^2)$, then the leading term of $\varphi$ is $(t\varepsilon^2_3-\varepsilon_4^2)x^M$. The equality $\deg(\varphi)=M$ is thus proved.

\item
If $\deg(g)>1$, then $\deg(a_4g\alpha_4^2)>1$ for all nonzero $\alpha_4\in k[x]$. Hence, by (1), $\varphi=x$ can only happen when $\alpha_4=0$. Thus, the equality $\varphi=x$ forces $t\varepsilon_3^2x$ to be the leading term of $\varphi$. But this term cannot be $x$ since $t$ is not a square in $k$.
\qedhere
\end{enumerate}
\end{proof}

Recall that a field $k$ is called a $C_1$-field (\cite{Lang52}) if for every positive integer $n$, every homogeneous polynomial of degree $<n$ in $k[x_1,\cdots, x_n]$ has a nontrivial zero over $k$.
Recall that $g\in k[x]$ is a monic polynomial of even degree and let $t\in k^*$ be a non-square element.
We put $a_1=1$, $a_2=-t$, $a_3=xt$ and $a_4=-xg$.

\begin{thm}\label{4.2}
  With notation as above, assume further that $k$ is a $C_1$-field $($e.g. $k=\C(\!(t)\!))$.
  Put $R=k[x]$ and define the diagonal $R$-lattice $L=\langle a_1,a_2,a_3,a_4\rangle$, i.e.,
  \[
  L\colonequals Rv_1\perp Rv_2\perp Rv_3\perp Rv_4
  \]
  with the associated quadratic form $Q$ determined by $Q(v_i)=a_i$ for each $i=1,\cdots, 4$.

Suppose that $\deg(g)>1$ and that $g(0)=1$.

  \begin{enumerate}[label={{\upshape(\arabic*)}}]
    \item Over the rational function field $F=k(x)$ the quadratic space $V\colonequals FL$ is universal, i.e., represents all the nonzero elements of $F$.
    \item The $R$-lattice $L$ is locally universal at every $\fp\in\Spm(R)$.
    \item The spin group $\mathbf{Spin}(V)$ does not satisfy strong approximation over $R$.
  \end{enumerate}
\end{thm}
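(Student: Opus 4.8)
The plan is to prove (1), a statement over $F$, then (2), a family of local statements, and finally to deduce (3) from the genus theory of \S\ref{sec2} by playing (1) and (2) against Lemma~\ref{4.1}. For (1): since $k$ is $C_1$, the Tsen--Lang theorem \cite{Lang52} shows that $F=k(x)$ is a $C_2$-field, so every quadratic form in at least $5$ variables over $F$ is isotropic. Given $c\in F^\times$, apply this to the $5$-dimensional form $V\perp\langle -c\rangle$: a nontrivial zero $(w,\lambda)$ has either $\lambda\ne 0$, whence $f(w/\lambda)=c$, or $\lambda=0$, whence $V$ is isotropic and hence universal; either way $V$ represents $c$.

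For (2), I would use repeatedly the elementary fact that a nondegenerate quadratic form of rank $\ge 2$ over a $C_1$-field is universal (to represent $c\ne 0$, apply the $C_1$ property to the rank-$\ge 3$ form $q\perp\langle -c\rangle$ and argue as in (1)). Each residue field $\kappa(\fp)=k[x]/\fp$ is a finite extension of $k$, hence again $C_1$. Fix $\fp=(p(x))\in\Spm(R)$. If $p\nmid xg$, then $L_{\fp}$ is unimodular of rank $4$ over $\widehat{R}_{\fp}$, its reduction modulo $\fp$ is a nondegenerate rank-$4$ form over the $C_1$-field $\kappa(\fp)$, hence isotropic, so a primitive isotropic vector lifts by Hensel's lemma and $L_{\fp}$ splits off a hyperbolic plane, which already represents all of $\widehat{R}_{\fp}$. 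If $p\mid g$ but $p\ne x$, then $x\in\widehat{R}_{\fp}^\times$, the rank-$3$ sublattice $\langle 1,-t,xt\rangle$ of $L_{\fp}$ is unimodular, and the same argument splits a hyperbolic plane off it. The delicate case is $\fp=(x)$: since $g(0)=1$, $g$ is a unit of $k[\![x]\!]$ and $L_{(x)}\cong\langle 1,-t\rangle\perp x\langle t,-g\rangle$, but $V_{(x)}$ is anisotropic over $k(\!(x)\!)$ (by Springer's theorem, as $t$ is a non-square in $k$), so no hyperbolic plane can be split off. Instead I would observe that the reductions modulo $x$ of the two binary Jordan components, $\langle 1,-t\rangle$ and $\langle t,-1\rangle$ over $k$, are universal, so by Hensel's lemma both $\langle 1,-t\rangle$ and $\langle t,-g\rangle$ represent every unit of $k[\![x]\!]$. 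Then a nonzero $c=x^nu$ with $u\in k[\![x]\!]^\times$ is represented: if $n$ is even, $u$ is represented by $\langle 1,-t\rangle$ and $c$ by scaling with $x^{n/2}$; if $n$ is odd, writing $u=t\gamma^2-g\delta^2$ gives $xu=xt\gamma^2-xg\delta^2$, represented by $x\langle t,-g\rangle$, and $c$ follows by scaling. Hence $L_{(x)}$ is universal too.

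For (3), suppose for contradiction that $\bfSpin(V)$ satisfies strong approximation over $R=k[x]$. By (1), $V$ is universal over $F$, so applying the spinor norm to a product $\tau_u\tau_w$ of symmetries with $f(u)=1$ shows that $\theta\colon\rO^+(V)\to F^\times/F^{\times 2}$ is surjective; this is condition~\ref{lattice dividing property 1} of Proposition~\ref{2.3}. By (2), $L$ is locally universal, so exactly as in the proof of Corollary~\ref{3.7}(1) a suitable product of two symmetries in $\rO^+(L_{\fp})$ realizes any prescribed class of $\widehat{R}_{\fp}^\times$ under $\theta_{\fp}$; this is condition~\ref{lattice dividing property 2}. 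Corollary~\ref{3.3} then gives $h^+(L)\mid\#(\Pic(R)/2)$, and since $k[x]$ is a principal ideal domain, $\Pic(R)=0$, whence $h^+(L)=1$. By the class number criterion of Theorem~\ref{3.5}(i), the representability of the nonzero element $x\in R$ by $L$ satisfies the Hasse principle. But $x$ is represented by $L$ over $F$ by (1) and over $\widehat{R}_{\fp}$ for every $\fp$ by (2), whereas Lemma~\ref{4.1}(2) — here the hypothesis $\deg g>1$ is used — shows $x\ne\sum_{i=1}^4 a_i\alpha_i^2$ for all $\alpha_i\in k[x]$, i.e.\ $x$ is not represented by $L$ over $R=k[x]$. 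This contradiction proves that $\bfSpin(V)$ does not satisfy strong approximation over $R$.

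The step I expect to cost the most work is the prime $\fp=(x)$ in part (2): there the ambient $4$-dimensional space is anisotropic, so the hyperbolic-plane shortcut available at every other prime fails, and one must argue through the Jordan splitting together with the universality of binary forms over the $C_1$-field $k$. Everything else is routine, given the genus theory of \S\ref{sec2}, Corollaries~\ref{3.3} and~\ref{3.7}, Theorem~\ref{3.5}, and Lemma~\ref{4.1}.
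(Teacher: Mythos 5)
Your proposal is correct, and parts (1) and (2) follow essentially the same path as the paper: Tsen--Lang for the universality of $V$ over $F=k(x)$; unimodular sublattices plus the $C_1$ property of the residue fields and O'Meara's local representation theorem (equivalently, Hensel lifting of an isotropic vector) at every $\fp\neq xR$ (the paper simply uses the unimodular ternary sublattice $\langle 1,-t,xt\rangle$ uniformly for all such $\fp$, so your case split $p\nmid xg$ versus $p\mid g$ is an unnecessary refinement); and at $\fp=xR$ the Jordan splitting $\langle 1,-t\rangle\perp x\langle t,-g\rangle$ with $g$ a square (not merely a unit) in $k[\![x]\!]$, followed by the parity-of-valuation scaling argument. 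Where you genuinely diverge is in (3): the paper deduces the contradiction from the \emph{integral point criterion}, Theorem\;\ref{3.5}\,(ii), using $\mathrm{cd}_2(k(x))\le 2$ and $\dim L=\dim M+3$, whereas you go through the \emph{class number criterion}, Theorem\;\ref{3.5}\,(i): you verify conditions \ref{lattice dividing property 1} and \ref{lattice dividing property 2} of Proposition\;\ref{2.3} by the symmetry argument (exactly as in the proof of Corollary\;\ref{3.7}\,(1), with surjectivity of $\theta$ coming from universality of $V$ rather than isotropy), invoke Corollary\;\ref{3.3} and the fact that $k[x]$ is a PID to get $h^+(L)=1$, and then contradict Lemma\;\ref{4.1}\,(2). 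Both routes are valid. Yours stays entirely within the genus theory of \S\ref{sec2} and avoids the cohomological input behind Theorem\;\ref{3.5}\,(ii) (the vanishing $H^1(F_{\fp},H)=1$ from \cite{BP1} and the descent of strong approximation from $G$ to $G/H$), at the cost of having to check the local spinor norm conditions; the paper's route is shorter given that Theorem\;\ref{3.5}\,(ii) is already available, and does not use that $R$ is a PID.
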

\begin{proof}
\hfill
\begin{enumerate}[label={{\upshape(\arabic*)}}]
  \item Since $k$ is a $C_1$-field, every quadratic form in more than 4 variables over $F=k(x)$ is isotropic. This implies that the quaternary space $FL$ is universal.

  \item Recall that a lattice over $\widehat{R}_{\fp}$ is unimodular if it is isomorphic to a lattice in diagonal form $\langle u_1,\cdots, u_r\rangle$ where each $u_i$ is a unit in  $\widehat{R}_{\fp}$. If $L'$ is a unimodular lattice over $\widehat{R}_{\fp}$, by \cite[p.~850, Thm.\;1]{OMeara58AJM}, an element $a\in \widehat{R}_{\fp}$ is represented by $L'$ if the quadratic space $F_{\fp}L'$ represents $a$ over $F_{\fp}$.

 If $\fp\neq xR$, then $a_1=1,\,a_2=-t$ and $a_3=xt$ are all units in $\widehat{R}_{\fp}$. This means that $L_1\colonequals \widehat{R}_{\fp}v_1\perp \widehat{R}_{\fp}v_2\perp \widehat{R}_{\fp}v_3=\langle a_1,a_2,a_3\rangle$ is a unimodular sublattice of $L_{\fp}$.  The residue field $\kappa(\fp)$ of $\widehat{R}_{\fp}$ is a finite extension of $k$, and hence a $C_1$-field. Therefore, ternary quadratic forms over $\kappa(\fp)$ are all isotropic. Using Hensel's lemma, we can deduce that the quadratic space $F_{\fp}L_1$ is isotropic and hence universal. As we have said previously, this shows that $L_1$ is universal. Thus $L_{\fp}$ is universal as well.

 Now suppose $\fp=xR$. The assumption $g(0)=1$ implies that  $g$ is a square in $\widehat{R}_{\fp}^\times$. Thus,
 \[
 L_{\fp}=\langle a_1,a_2\rangle\perp \langle a_3,a_4\rangle=\langle 1,\,-t\rangle\perp \langle xt,\,-xg\rangle\cong \langle 1,\,-t\rangle\perp \langle xt,\,-x\rangle\,.
 \]By Hensel's lemma, the binary lattice $\langle 1,\,-t\rangle$ represents all the elements in $\widehat{R}_{\fp}^\times$. This also implies that $\langle xt,\,-x\rangle$ represents all the elements in $x\widehat{R}_{\fp}^\times$. Hence $L_{\fp}$ represents all the elements in the union $\widehat{R}_{\fp}^\times\cup x\widehat{R}_{\fp}^\times$. Since $x$ is a uniformizer at $\fp=xR$, this shows that $L_{\fp}$ is universal over $\widehat{R}_{\fp}$.

  \item Combining (1), (2) and Lemma \ref{4.1}, we see that the representability of $x$ by $L$ does not satisfy the Hasse principle. So the result follows by the integral point criterion in Theorem \ref{3.5}.
\qedhere
\end{enumerate}
\end{proof}

\begin{remark}\label{4.3}
  The quadratic space  $V=FL$ in Theorem \ref{4.2} is \emph{definite}, in the sense that it is anisotropic over the completion $F_{\infty}=k(\!(x^{-1})\!)$. So there is no contradiction between Theorem\;\ref{4.2} and Corollary \ref{3.7}.

We think that whether there is an indefinite space $V$ over $F=k(x)$ such that strong approximation fails for $\mathbf{Spin}(V)$ is an interesting open question.
If $k$ is finite, so that $k(x)$ is a global field, it is well known that such an indefinite space does not exist (see e.g. \cite{Eichler52book} or \cite{Kneser66StrongApp}).
\end{remark}

\begin{remark}\label{4.4}
  The spin group $G=\mathbf{Spin}(V)$ in Theorem \ref{4.2} is a semisimple simply connected group of inner type A. Indeed, if $g$ is a square in $F=k(x)$,
  then the even Clifford algebra $C_0(V)$ of $V$ is isomorphic to $A\times A$ for some quaternion $F$-algebra $A$. In this case we have  $G\cong \mathbf{SL}_1(A)\times \mathbf{SL}_1(A)$. If $g$ is not a square in $F$, then $C_0(V)$ is a quaternion algebra over the quadratic extension $L\colonequals F(\sqrt{g})$. In this case $G$ is isomorphic to $R_{L/F}\mathbf{SL}_1(C_0(V))$, the Weil restriction of the $L$-group $\mathbf{SL}_1(C_0(V))$.

  In \cite[Thm.\;4.3]{HuTian23Whitehead}, it is proved that if $C$ is an irreducible normal affine over a field $k$ of characteristic 0 and cohomological dimension $\mathrm{cd}(k)\le 2$,
  then any central division algebra $D$ over the function field $k(C)$, the group $\mathbf{SL}_n(D)$ satisfies strong approximation over the coordinate ring $k[C]$ if $n\ge 2$. By our Theorem\;\ref{4.2},  that theorem fails if $n=1$.
\end{remark}

\subsection*{Acknowledgements}
The first author was supported by grants from the National Natural Science Foundation of China (no.\,12171223) and the Guangdong Basic and Applied Basic Research Foundation (no.\,2021A1515010396).
The third author was partially supported by the Postdoctoral Fellowship Program of CPSF
under Grant Number GZC20233474.

\addcontentsline{toc}{section}{\textbf{References}}

\bibliographystyle{alpha}

\bibliography{SAQuadF}

\

Yong HU

\medskip

Department of Mathematics

Southern University of Science and Technology


Shenzhen 518055, China


Email: huy@sustech.edu.cn

\

Jing LIU

\medskip

Department of Mathematics

Southern University of Science and Technology


Shenzhen 518055, China


Email: 12131230@mail.sustech.edu.cn

\

Yisheng TIAN

\medskip

Institute for Advanced Study in Mathematics

Harbin Institute of Technology


Harbin 150001, China


Email: tysmath@mail.ustc.edu.cn


\end{document}